\documentclass[12pt]{article}

\usepackage{amsmath}
\usepackage{amssymb}
\usepackage{microtype}
\usepackage{amsthm}
\usepackage[margin=30mm]{geometry}
\usepackage[usenames]{color}
\usepackage{graphicx}
\usepackage{tikz}
\usetikzlibrary{arrows,decorations.pathmorphing,decorations.footprints,fadings,calc,trees,mindmap,shadows,decorations.text,patterns,positioning,shapes,matrix,fit,through,decorations.pathreplacing}

\newtheorem{thm}{Theorem}[section]
\newtheorem{defn}[thm]{Definition}
\newtheorem{lemma}[thm]{Lemma}

\newtheorem{cor}[thm]{Corollary}

\newtheorem{claim}[thm]{Claim}

\begin{document}

\title{Combinatorially interpreting generalized Stirling numbers}

\author{John Engbers\thanks{john.engbers@marquette.edu; Department of Mathematics, Statistics and Computer Science,
Marquette University, Milwaukee WI 53201. Research supported by the Simons Foundation.} ~~~David Galvin\thanks{dgalvin1@nd.edu; Department of Mathematics,
University of Notre Dame, Notre Dame IN 46556, + 1 574 6814181 (corresponding author). Research supported by NSA grant H98230-13-1-0248, and by the Simons Foundation.} ~~~Justin Hilyard\thanks{jhilyard@nd.edu; Department of Mathematics,
University of Notre Dame, Notre Dame IN 46556. Research supported by NSA grant H98230-13-1-0248.}}

\date{\today}

\maketitle

\begin{abstract}
The Stirling numbers of the second kind ${n \brace k}$ (counting the number of partitions of a set of size $n$ into $k$ non-empty classes) satisfy the relation
$$
\left(xD\right)^nf(x) = \sum_{k \geq 0} {n \brace k} x^k D^kf(x)
$$
where $f$ is an arbitrary function and $D$ is differentiation with respect to $x$. More generally, for every word $w$ in alphabet $\{x,D\}$ the identity
$$
wf(x) = x^{\left(\#\left(\mbox{$x$'s in $w$}\right)-\#\left(\mbox{$D$'s in $w$}\right)\right)}\sum_{k \geq 0} S_w(k) x^k D^kf(x)
$$
defines a sequence $\left(S_w(k)\right)_k$ of {\em Stirling numbers (of the second kind)} of $w$. Explicit expressions for, and identities satisfied by, the $S_w(k)$ have been obtained by numerous authors, and combinatorial interpretations have been presented.

Here we provide a new combinatorial interpretation that, unlike previous ones, retains the spirit of the familiar interpretation of ${n \brace k}$ as a count of partitions. Specifically, we associate to each $w$ a quasi-threshold graph $G_w$, and we show that $S_w(k)$ enumerates partitions of the vertex set of $G_w$ into classes that do not span an edge of $G_w$. We use our interpretation to re-derive a known explicit expression for $S_w(k)$, and in the case $w=\left(x^sD^s\right)^n$ to find a new summation formula linking $S_w(k)$ to ordinary Stirling numbers. We also explore a natural $q$-analog of our interpretation.

In the case $w=\left(x^rD\right)^n$ it is known that $S_w(k)$ counts increasing, $n$-vertex, $k$-component $r$-ary forests.  Motivated by our combinatorial interpretation we exhibit bijections between increasing $r$-ary forests and certain classes of restricted partitions.

\end{abstract}

\section{Introduction}

The Stirling number of the second kind, ${n\brace k}$, counts the number of ways of partitioning a set of $n$ elements into $k$ non-empty classes. It satisfies the recurrence
\begin{equation} \label{eq-Stirling-recurrence}
{n \brace k} = \left\{
\begin{array}{ll}
{n-1 \brace k-1} + k{n-1 \brace k} & \mbox{if both $n>0$ and $k>0$, and} \\
{\bf 1}_{\{n=k\}} & \mbox{if $nk=0$.}
\end{array}
\right.
\end{equation}
The numbers ${n\brace k}$ satisfy numerous algebraic identities; indeed, it was through the identity
$$
x^n = \sum_{k \geq 0} {n \brace k} x^{\underline{k}}
$$
for $n \geq 0$, where $x^{\underline{k}}$ is the $k^{th}$ falling power $x(x-1)\ldots (x-(k-1))$, that James Stirling originally introduced the numbers, in his 1730 {\em Methodus differentialis} \cite{Stirling}. Central to the present paper is the following identity, probably first observed by Scherk in his 1823 thesis \cite{Scherk} (see \cite{BlasiakFlajolet} for details), which arises when one repeatedly applies the operator $xD$ to an infinitely differentiable function $f(x)$ (where $D$ is differentiation with respect to $x$). For all $n \geq 0$ we have
\begin{equation} \label{eq-Stirling-as-op}
(xD)^nf(x) = \sum_{k \geq 0} {n\brace k} x^k D^kf(x)
\end{equation}
(where here and throughout we interpret $D^0$ as the identity). One way to verify (\ref{eq-Stirling-as-op}) is to prove by induction on $n$ that $(xD)^n f(x)$  takes the form $\sum_{k \geq 0} S(n,k) x^k D^k f(x)$ for some numbers $S(n,k)$, and then show that these numbers satisfy (\ref{eq-Stirling-recurrence}) (with ${\cdot \brace \cdot\cdot}$ replaced everywhere by $S(\cdot,\cdot\cdot)$).

More generally, for each word $w$ in alphabet $\{x,D\}$, with $m$ $x$'s and $n$ $D$'s, we have a unique expansion of the form
\begin{equation} \label{eq-weyl_general}
w = x^{m-n} \sum_{k \geq 0} S_w(k) x^kD^k,
\end{equation}
with both sides being viewed as operators on a space of infinitely differentiable functions. One may easily verify (\ref{eq-weyl_general}) by induction on the length of the word $w$. Uniqueness comes from considering two different expressions of the form of the right-hand side of (\ref{eq-weyl_general}), and applying their difference to $f(x)=x^{k_0}$, where $k_0$ is the smallest index $k$ for which the coefficients of $x^kD^k$ differ between the two expressions. The result is a power series in $x$ with at least one non-zero coefficient, and so cannot be identically zero.

The integer sequence $\left(S_w(k)\right)_{k \geq 0}$ that arises from (\ref{eq-weyl_general}) is what we call the {\em Stirling sequence (of the second kind)} of $w$; Scherk's identity (\ref{eq-Stirling-as-op}) states that if $w=\left(xD\right)^n$ then the Stirling sequence of $w$ coincides with the ordinary Stirling sequence of the second kind.

The set of words on alphabet $\{x,D\}$ forms a representation of the {\em Weyl algebra}, as the operators $x$ and $D$ satisfy the Weyl algebra's defining relation $Dx=xD+1$. In this context the right-hand side of (\ref{eq-weyl_general}) is referred to as the {\em normal order} of the word $w$. The normal order problem arises in quantum mechanics, where $x$ is viewed as a ``creation operator'' and $D$ as an ``annihilation operator'' in a space of polynomials. Because these operators do not commute, it is desirable from a computational point of view to find expansions of words that are presented as a sum of words, all of which have the annihilation part completely to the right and the creation part completely to the left. See \cite{BlasiakHorzelaPensonSolomonDuchamp} for an introduction to this perspective.

The study of $S_w(k)$ has a long history. Some instances (beyond $w=(xD)^n$) were studied by Scherk in his 1823 thesis \cite{Scherk}. Carlitz \cite{Carlitz,Carlitz2} derived summation formulae and identities for some instances in the 1930's, and Comtet \cite{Comtet} considered the case $w=(x^rD)^n$ in the 1970's, as did Lang \cite{Lang} in 2000.

All of these references deal with the problem through generating functions and recurrences. In 1973 Navon \cite{Navon} provided a lovely combinatorial interpretation of $S_w(k)$ for all $w$, associating a Ferrers board to $w$ and realizing $S_w(k)$ as the number of ways of placing non-attacking rooks on the board (see Section \ref{sec-H} for more details); Varvak thoroughly explored this interpretation, and obtained $q$-analogs for it, in \cite{Varvak}. Very recently Codara et al. \cite{CodaraDantonaHell} gave a combinatorial interpretation in terms of graph coloring in the case $w=(x^sD^s)^n$; our Theorem \ref{thm-main_comb_int}, which was developed independently, generalizes this interpretation to arbitrary $w$.

The arrival of the quantum mechanics community to the problem in the early 2000's has led to a flurry of activity. Blasiak et al. \cite{BlasiakPensonSolomon,BlasiakPensonSolomon2} thoroughly studied identities and recurrences for $S_w(k)$ for certain words $w$, and Schork \cite{Schork} looked at $q$-analogs. Mendez et al. \cite{MendezBlasiakPenson} looked at $S_w(k)$ for general $w$, and gave a combinatorial interpretation in terms of certain generalized tree structures.
This interpretation was explored more by Lang \cite{Lang2} and recently quite thoroughly by Blasiak and Flajolet \cite{BlasiakFlajolet}. The connection to rook polynomials was revisited by Blasiak et al. in \cite{BlasiakDuchampHorzelaPensonSolomon}, and in \cite{SolomonDuchampBlasiakHorzelaPenson} the same authors explored connections to Feynman diagrams. In \cite{AsaklyMansourSchork}, Asakly et al. showed how to read off the normal order of a word from a certain labeled tree, and in \cite{MaMansourSchork} Ma et al. explored connections between normal ordering and context-free grammars.

One contribution of the present paper is a simple new graph theoretic interpretation of $S_w(k)$ for any $w$ and $k$, that very naturally generalizes the standard interpretation of ${n \brace k}$ as a count of partitions. Specifically, we show how to associate to any $w$ a ground-set and a set of forbidden pairs (which we encode as a graph on the ground-set), in such a way that $S_w(k)$ counts the number of partitions of the ground set that avoid putting both members of a forbidden pair into the same block. The statements of all our results can be found in Section \ref{sec-statements}, and the remaining sections are devoted to the proofs.

\section{Statement of results} \label{sec-statements}

Here we outline the main results of the present paper.

\subsection{A new combinatorial interpretation of $S_w(k)$} \label{subsec-combint1}

We begin by giving a new combinatorial interpretation of $S_w(k)$. We associate to each $w$ a graph $G_w$ with the property that $S_w(k)$ enumerates the partitions of the vertex set of $G_w$ into a specified number of non-empty classes, with the property that no class includes both endvertices of an edge of $G_w$. In the case $w=(xD)^n$, $G_w$ turns out to be the empty graph on $n$ vertices and we recover the usual combinatorial interpretation of ${n \brace k}$. In the case $w=(x^sD^s)^n$, $G_w$ is the disjoint union of $n$ copies of the complete graph on $s$ vertices, and we recover a recent result of Codara et al. \cite[Proposition 2.2]{CodaraDantonaHell}. To the best of our knowledge, ours is the first combinatorial interpretation of $S_w(k)$ for arbitrary $w$ as a count of (restricted) partitions.

To define $G_w$, we first introduce the notion of a {\em Dyck word}.
\begin{defn}
A word $w$ in alphabet $\{x,D\}$ is a {\em Dyck word} if it satisfies the following:
\begin{enumerate}
\item it has the same number of $x$'s as $D$'s, and
\item reading the word from left to right, every initial segment has at least as many $x$'s as $D$'s.
\end{enumerate}
We say that a Dyck word is {\em irreducible} if either it is the word $xD$ or it is of the form $xw'D$ with $w'$ a non-empty Dyck word, and we say that it is {\em reducible} otherwise.
\end{defn}
Observe that a reducible word may be written (in unique way) as $w_1 \ldots w_\ell$, where each $w_i$ is irreducible.

A {\em Dyck path} in ${\mathbb R}^2$ is a staircase path (a path that proceeds by taking unit steps, either in the positive $x$ direction or the positive $y$ direction) that starts at $(0,0)$, ends on the line $x=y$, any never goes below this line. There is a natural correspondence between Dyck paths and Dyck words, given by mapping steps in the positive $y$ direction to $x$, and steps in the positive $x$ direction to $D$ (see Figure \ref{fig-Dyckpathexample} for an example). Irreducible Dyck words correspond to Dyck paths that meet the line $x=y$ only at their initial and terminal points, and reducible Dyck words correspond to Dyck paths that meet the line at some intermediate points as well.

We now associate to each Dyck word $w$ an unlabeled graph $G_w$ inductively, as follows:
\begin{enumerate}
\item If $w=xD$, then $G_w = K_1$ (the isolated vertex).
\item If $w$ is irreducible with $w=xw'D$ for some non-empty Dyck word $w'$, then $G_w = G_{w'} + K_1$ (the graph obtained from $G_{w'}$ by adding a dominating vertex).
\item If $w$ is reducible, say $w=w_1\ldots w_\ell$ with each $w_i$ irreducible, then $G_w = G_{w_1} \cup \cdots \cup G_{w_\ell}$ (the disjoint union of the $G_{w_i}$'s).
\end{enumerate}
In particular clause 3 above tells us that if $w=w'w''$ with $w'$ and $w''$ both Dyck words (not necessarily irreducible) then $G_w = G_{w'} \cup G_{w''}$.

We note in passing that $G_w$ belongs to the well-known family of {\em quasi-threshold}, or {\em trivially perfect} graphs (see e.g. \cite{Jing-HoJer-JeongChang} for a survey); recall that the family of quasi-threshold graphs is the smallest family that contains $K_1$ and is closed under adding dominating vertices and taking disjoint unions (if unions that do not amount to the addition of an isolated vertex are forbidden, we obtain the smaller class of {\em threshold} graphs).

As an example, consider the word $w=xxDxxDxDDD$. Writing this as
$$
x\left(\left[xD\right]\left[x\left\{\langle xD \rangle \langle xD \rangle\right\}D\right]\right)D,
$$
we quickly see that $G_w$ is constructed by taking two isolated vertices (say $a$ and $b$), adding a dominating vertex (say $c$), taking the union of the resulting graph with an isolated vertex (say $d$), and then adding a final dominating vertex (say $e$).  (See Figure \ref{fig-Dyckpathexample}.) 

\begin{figure}[ht]
\begin{center}
\begin{tikzpicture}[scale=1.25]
    \draw (1,0) -- (1,5);
    \draw (2,0) -- (2,5);
    \draw (3,0) -- (3,5);
    \draw (4,0) -- (4,5);
    \draw (5,0) -- (5,5);
    \draw (0,1) -- (5,1);
    \draw (0,2) -- (5,2);
    \draw (0,3) -- (5,3);
    \draw (0,4) -- (5,4);
    \draw (0,5) -- (5,5);
    \draw [dashed] (-.25,-.25) -- (5.25,5.25);
    \draw [line width=2pt] (0,0) -- (0,2) -- (1,2) -- (1,4) -- (2,4) -- (2,5) -- (5,5);
    \draw [<->,thick] (0,5.5) node (yaxis) [above] {$y$}
    |- (5.5,0) node (xaxis) [right] {$x$};

    \node (v1) at (7,2.5) [circle,draw,label=270:$a$] {};
    \node (v2) at (8.25,2.5) [circle,draw,label=270:$b$] {};
    \node (v3) at (9.5,2.5) [circle,draw,label=270:$c$] {};
    \node (v4) at (10.75,2.5) [circle,draw,label=270:$d$] {};
    \node (v5) at (12,2.5) [circle,draw,label=270:$e$] {};
    \draw (v2) -- (v3);
    \draw (v4) -- (v5);
    \path[every node/.style={font=\sffamily\small}] (v1) edge[loop, distance=.4in, out=45, in=135] node {} (v3);
    \path[every node/.style={font=\sffamily\small}] (v1) edge[loop, distance=.7in, out=60, in=120] node {} (v5);
    \path[every node/.style={font=\sffamily\small}] (v2) edge[loop, distance=.55in, out=52.5, in=127.5] node {} (v5);
    \path[every node/.style={font=\sffamily\small}] (v3) edge[loop, distance=.4in, out=45, in=135] node {} (v5);

\end{tikzpicture}

\caption{The Dyck path corresponding to the Dyck word $w = xxDxxDxDDD$ and the associated graph $G_w$.}
\label{fig-Dyckpathexample}
\end{center}
\end{figure}

To state our first theorem, it is convenient to generalize the symbol ${n \brace k}$ to graphs.
\begin{defn}
Let $G$ be a graph and $k \geq 0$ an integer. The {\em $k^{th}$ graph Stirling number} of $G$, denoted ${G \brace k}$, is the number of ways of partitioning the vertex set of $G$ into $k$ non-empty classes, none of which contains both endvertices of an edge of $G$.
\end{defn}
Equivalently ${G \brace k}$ counts partitions of $G$ into $k$ non-empty {\em independent sets}, that is, sets of mutually non-adjacent vertices; it also counts the number of different proper $k$-colorings of $G$ using all $k$ colors, with two colorings considered to be the same if they differ only on the names of the colors. These numbers were first studied in their own right (to the best of our knowledge) by Tomescu \cite{Tomescu}, although they were probably first introduced (in the setting of planar graphs) by Birkhoff \cite{Birkhoff}. Note that ${G \brace k}$ does indeed generalize the Stirling numbers ${n \brace k}$, since when $G=E_n$, the graph on $n$ vertices with no edges, we have ${E_n \brace k} = {n \brace k}$.

We can now state of first theorem, which gives a simple new combinatorial interpretation of $S_w(k)$ in terms of restricted partitions.
\begin{thm} \label{thm-main_comb_int}
Let $w$ be a Dyck word in the alphabet $\{x,D\}$. For all $k \geq 0$ we have
$$
S_w(k) = {G_w \brace k}.
$$
\end{thm}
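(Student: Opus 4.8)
The plan is to induct on the length of the Dyck word $w$, following the three clauses of the inductive definition of $G_w$, but to compare $S_w(k)$ with ${G_w \brace k}$ not directly but through a common generating function in the falling-factorial basis. For a Dyck word the prefactor in (\ref{eq-weyl_general}) is trivial, so $w = \sum_k S_w(k)\, x^k D^k$ as operators. Since $x^k D^k\, x^n = n^{\underline{k}} x^n$, the monomial $x^n$ is an eigenvector of $w$ with eigenvalue $\Phi_w(n) := \sum_k S_w(k)\, n^{\underline{k}}$, a polynomial in $n$. On the graph side, the chromatic polynomial of $G_w$ is $P(G_w,n) = \sum_k {G_w \brace k}\, n^{\underline{k}}$ (choose a partition into $k$ nonempty independent classes, then name the classes injectively with $n$ colors). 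Because the falling powers $\{n^{\underline{k}}\}_k$ are linearly independent as polynomials, it suffices to prove the single polynomial identity $\Phi_w = P(G_w,\cdot)$; matching coefficients then yields $S_w(k) = {G_w \brace k}$ for every $k$.

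To run the induction I would show that $\Phi_w$ obeys exactly the recursions that the chromatic polynomial obeys under the operations building $G_w$. For the base case $w=xD$ one computes $\Phi_{xD}(n) = n = P(K_1,n)$. For an irreducible word $w=xw'D$, applying $w$ to $x^n$ and using $Dx^n = n x^{n-1}$ gives $w\,x^n = n\, x\,(w' x^{n-1}) = n\,\Phi_{w'}(n-1)\,x^n$, so $\Phi_w(n) = n\,\Phi_{w'}(n-1)$; this matches the dominating-vertex recursion $P(G_{w'}+K_1,n) = n\,P(G_{w'},n-1)$. For a reducible word I would split $w=w_1w_2$ into two shorter Dyck words, so that $G_w = G_{w_1}\cup G_{w_2}$ by the remark following the definition of $G_w$; then $(w_1w_2)x^n = w_1(\Phi_{w_2}(n)x^n) = \Phi_{w_1}(n)\Phi_{w_2}(n)x^n$, giving the multiplicativity $\Phi_{w_1w_2} = \Phi_{w_1}\Phi_{w_2}$, which matches $P(G_{w_1}\cup G_{w_2},n) = P(G_{w_1},n)\,P(G_{w_2},n)$. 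Since the two families satisfy identical base cases and identical recursions, they coincide. (Each identity holds for all nonnegative integers $n$, hence as polynomials.)

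The step I expect to be the crux is the reducible case. Handled directly — by expanding $w_1w_2 = \sum_{i,j} S_{w_1}(i)S_{w_2}(j)\, x^iD^i x^jD^j$ and normal-ordering $D^ix^j$ via the Weyl relation $Dx=xD+1$ — it produces the convolution $S_w(k) = \sum_{i,j} S_{w_1}(i)S_{w_2}(j)\binom{i}{i+j-k} j^{\underline{i+j-k}}$, which one must then reconcile with the count of partitions of a disjoint union obtained by merging $i+j-k$ classes of $G_{w_1}$ with classes of $G_{w_2}$. That matching is where the genuine combinatorial content lies. The eigenvalue/chromatic-polynomial device above is designed precisely to bypass this computation: under $w \mapsto \Phi_w$, concatenation of words becomes multiplication of polynomials, mirroring the multiplicativity of the chromatic polynomial over disjoint unions, so the messy convolution is never needed. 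I would therefore present the slick argument as the main proof and relegate the convolution identity to a remark, noting that it furnishes an independent combinatorial proof of the disjoint-union recursion for the graph Stirling numbers ${G \brace k}$.
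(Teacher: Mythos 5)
Your proof is correct, and it takes a genuinely different route from the paper's at the one step that matters. The paper proceeds by induction through two claims: Claim \ref{claim-dom} (adding a dominating vertex), which is the same in spirit as your irreducible case, and Claim \ref{claim-union} (disjoint union), where the paper does exactly the computation you set out to avoid --- it normal-orders $x^{k_1}D^{k_1}x^{k_2}D^{k_2}$ via Leibniz's rule and then matches the resulting convolution (\ref{induction}) against an explicit count of the partitions of $G_1 \cup G_2$ obtained by merging classes of partitions of $G_1$ with classes of partitions of $G_2$. Your eigenvalue device ($w\,x^n = \Phi_w(n)\,x^n$ with $\Phi_w(n)=\sum_k S_w(k)\,n^{\underline{k}}$, so that concatenation becomes multiplication) combined with multiplicativity of the chromatic polynomial over disjoint unions sidesteps that entirely, and the logic is sound: agreement at all nonnegative integers gives a polynomial identity, and uniqueness of coefficients in the falling-factorial basis then gives $S_w(k) = {G_w \brace k}$ for every $k$. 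It is worth noting that all the ingredients of your argument actually appear in the paper, just not assembled into a proof of this theorem: the expansion $\chi_G(q)=\sum_k {G \brace k} q^{\underline{k}}$ and the observation (\ref{lem-chromaticpoly}) that the chromatic polynomial determines the graph Stirling numbers are stated in Section \ref{sec-comb_int_new}, and Claim \ref{claim-chrom} establishes $\chi_{G_w}(q)=\prod_i(q-a_i)$ by precisely your induction (your $\Phi_w$ is this polynomial in disguise) --- but the paper uses these only to derive Theorem \ref{thm-closedform} after Theorem \ref{thm-main_comb_int} is already proved. As for what each approach buys: yours is shorter and cleaner; the paper's makes the merging combinatorics explicit, and that convolution is not disposable, since it is exactly the identity that gets $q$-deformed in Section \ref{sec-q} (Claim \ref{claim-unionq} together with Lemma \ref{lem-q-matching}) to prove Theorem \ref{thm-q}, where your chromatic-polynomial shortcut has no evident $q$-analog. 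So your closing suggestion --- demote the convolution to a remark --- would be fine for Theorem \ref{thm-main_comb_int} in isolation, but it would remove the engine that drives the paper's $q$-analog.
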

Our proof of Theorem \ref{thm-main_comb_int} will be direct, in the sense that we do not rely on previously known formulae or combinatorial interpretations for $S_w(k)$.

As an illustration of Theorem \ref{thm-main_comb_int}, let us return to $xxDxxDxDDD$. A little computation with the relation $Dx=xD+1$ yields
$$
xxDxxDxDDD = 2x^3D^3 + 4x^4D^4 + x^5D^5.
$$
On the other hand, we have ${G_w \brace 3} = 2$ (the two partitions of $V(G_w)$ into three non-empty independent sets are $ab|cd|e$ and $abd|c|e$), ${G_w \brace 4} = 4$ (the partitions being $ab|c|d|e$, $ad|b|c|e$, $a|bd|c|e$ and $a|b|cd|e$), ${G_w \brace 5} = 1$ (the unique partition being $a|b|c|d|e$), and ${G_w \brace k} = 0$ for all other $k$, exactly as predicted by Theorem \ref{thm-main_comb_int}.

In the case $w=(x^{t_i}D^{t_i})^n$ we get a particularly appealing combinatorial interpretation. Here $G_w=K_{t_1} \cup \ldots \cup K_{t_n}$, the disjoint union of cliques of various sizes, and Theorem \ref{thm-main_comb_int} gives that $S_w(k)$ counts the number of partitions of this union of cliques into $k$ non-empty independent sets. The case when all $t_i=2$ was observed in \cite{BlasiakFlajolet}, in slightly different language, and the case of all $t_i=t$ for general $t$ has appeared recently in \cite{CodaraDantonaHell}. Note that in this last case we may whimsically interpret ${G_w \brace k}$ as the number of ways of breaking up a gathering of $n$ sets of $t$-tuplets into $k$ non-empty blocks, in such a way that no block contains more than one member from each set of $t$-tuplets. Variants of this problem for twins, or $t=2$, were considered by Griffiths in \cite{Griffiths}; our work on these notes began after reading that paper.

Theorem \ref{thm-main_comb_int} can easily be extended to give a combinatorial interpretation of $S_w(k)$ for arbitrary $w$.
\begin{defn} \label{defn-associated-word}
Let $w$ be a word in the alphabet $\{x,D\}$. Let $a=a(w)$ be the least non-negative integer such that all initial segments of $x^aw$ have at least as many $x$'s as $D$'s, and let $b=b(w)$ be the unique non-negative integer such that $x^awD^b$ is a Dyck word. We refer to $x^awD^b$ as the Dyck word {\em associated with} $w$.
\end{defn}
The normal order of $w$ is easily obtained from that of $x^awD^b$. Indeed, suppose that $w$ has $m$ $x$'s and $n$ $D$'s. From (\ref{eq-weyl_general}) we have $w = x^{m-n}\sum_{k \geq 0} S_w(k) x^k D^k$. Then
\begin{eqnarray*}
x^awD^b & = & x^a\left(x^{m-n} \sum_{k \geq 0} S_w(k) x^k D^k\right)D^b \\
& = & x^{a+m-n} \sum_{k \geq 0} S_w(k) x^k D^{k+b} \\
& = & \sum_{k \geq 0} S_w(k) x^{k+b}D^{k+b},
\end{eqnarray*}
the last equality using $a+m=b+n$ (valid since $x^awD^b$ is a Dyck word). Since also $x^awD^b = \sum_{k \geq 0} S_{x^awD^b}(k) x^k D^k$, we get the identity $S_w(k)=S_{x^awD^b}(k+b)$. Therefore, the following is an immediate corollary of Theorem \ref{thm-main_comb_int}.
\begin{cor} \label{cor-main_comb_int}
Let $w$ be an arbitrary word in the alphabet $\{x,D\}$, and let $x^awD^b$ be its associated Dyck word, as in Definition \ref{defn-associated-word}. For all $k \geq 0$ we have
$$
S_w(k) = {G_{x^awD^b} \brace k+b}.
$$
\end{cor}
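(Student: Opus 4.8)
The plan is to separate the claim into two parts: a purely algebraic reduction that expresses $S_w(k)$ in terms of the Stirling sequence of the \emph{Dyck} word $x^awD^b$, followed by a direct appeal to Theorem \ref{thm-main_comb_int}. Since $x^awD^b$ is by construction (Definition \ref{defn-associated-word}) a Dyck word, Theorem \ref{thm-main_comb_int} applies to it and gives $S_{x^awD^b}(j) = {G_{x^awD^b} \brace j}$ for every $j \geq 0$; specializing to $j = k+b$ produces exactly the right-hand side of the corollary. Thus everything reduces to proving the index-shifted identity $S_w(k) = S_{x^awD^b}(k+b)$.

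To prove this identity I would compute the normal order of $x^awD^b$ in two ways. On one hand, starting from the expansion $w = x^{m-n}\sum_{k \geq 0} S_w(k) x^kD^k$ supplied by (\ref{eq-weyl_general}) (where $m$ and $n$ count the $x$'s and $D$'s in $w$), I would pre-multiply by $x^a$ and post-multiply by $D^b$, using the identity $a+m = b+n$ (which holds precisely because $x^awD^b$ is a Dyck word) to collapse the result to $\sum_{k \geq 0} S_w(k) x^{k+b}D^{k+b}$. On the other hand, the defining expansion of the Dyck word $x^awD^b$ is $\sum_{j \geq 0} S_{x^awD^b}(j) x^jD^j$. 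Equating these two operator expressions and invoking the uniqueness of the normal-order expansion established in the Introduction, I would match coefficients of $x^jD^j$: this forces $S_{x^awD^b}(j) = 0$ for $0 \leq j < b$ and $S_{x^awD^b}(j) = S_w(j-b)$ for $j \geq b$, the latter giving $S_{x^awD^b}(k+b) = S_w(k)$ upon setting $j = k+b$.

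There is no serious obstacle: the corollary is genuinely immediate once Theorem \ref{thm-main_comb_int} is available. The only point demanding a little care is the coefficient comparison in the second step, where one must be sure that the reindexing $j = k+b$ is carried out consistently and that the absence of terms $x^jD^j$ with $j<b$ on the left is correctly reflected on the right; both follow at once from uniqueness. Chaining the two identities yields $S_w(k) = S_{x^awD^b}(k+b) = {G_{x^awD^b} \brace k+b}$, which is the assertion of the corollary.
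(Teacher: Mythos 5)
Your proposal is correct and follows essentially the same route as the paper: pre-multiply the expansion $w = x^{m-n}\sum_{k\geq 0} S_w(k)x^kD^k$ by $x^a$ and post-multiply by $D^b$, use $a+m=b+n$ to obtain $x^awD^b = \sum_{k\geq 0} S_w(k)x^{k+b}D^{k+b}$, invoke uniqueness of the normal order to conclude $S_w(k)=S_{x^awD^b}(k+b)$, and then apply Theorem \ref{thm-main_comb_int} to the Dyck word $x^awD^b$. Your explicit remark that uniqueness also forces $S_{x^awD^b}(j)=0$ for $j<b$ is a small point the paper leaves implicit, but the argument is the same.
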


\medskip

Theorem \ref{thm-main_comb_int} and Corollary \ref{cor-main_comb_int} yield an explicit expression for $S_w(k)$. Let $w$ be any word with, say, $m$ $x$'s, and let $x^awD^b$ be its associated Dyck word.
\begin{defn} \label{defn-height}
The {\em height} $a_i$ of the $i^{th}$ $x$ in $x^awD^b$ is the excess of $x$'s over $D$'s in the initial segment of $x^awD^b$ that ends immediately prior to the $i^{th}$ $x$.
\end{defn}
Equivalently, the height of each $x$ can be calculated from the Dyck path associated with $x^awD^b$: if the step in the positive $y$ direction corresponding to a particular $x$ of the word goes from $(a,b)$ to $(a, b+1)$, then the height of that $x$ is $b-a$.
\begin{thm} \label{thm-closedform}
Let $w$ be an arbitrary word in the alphabet $\{x,D\}$, and let $x^awD^b$ be its associated Dyck word. With the $a_i$'s as in Definition \ref{defn-height}, we have
$$
S_w(k) = \frac{1}{(k+b)!}\sum_{\ell=0}^{k+b} (-1)^\ell{k+b \choose \ell} \prod_{i=1}^{m+a} (k+b-\ell-a_i).
$$
\end{thm}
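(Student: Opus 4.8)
The plan is to pass through the chromatic polynomial of the graph $G_{x^awD^b}$. Write $v=x^awD^b$ for the associated Dyck word, $G=G_v$ for its graph, and $N=m+a$ for the number of $x$'s in $v$, which is also the number of vertices of $G$. Let $P(G,t)$ denote the chromatic polynomial of $G$, that is, the number of proper colorings of $G$ from a palette of $t$ colors. By Corollary \ref{cor-main_comb_int} it suffices to express ${G \brace k+b}$ in the claimed form, and for this I would combine two ingredients: a standard inversion expressing ${G \brace j}$ through values of $P(G,\cdot)$, and an explicit factorization of $P(G,t)$ whose roots are exactly the heights $a_i$ of Definition \ref{defn-height}.

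For the first ingredient, I would use the fact that a surjective proper coloring of $G$ with a palette of exactly $j$ colors is the same thing as an ordered partition of $V(G)$ into $j$ non-empty independent classes, of which there are $j!\,{G \brace j}$. Counting surjective colorings by inclusion--exclusion over the set of omitted colors gives
$$
j!\,{G \brace j} = \sum_{\ell=0}^{j} (-1)^\ell \binom{j}{\ell}\, P(G,\, j-\ell).
$$
Setting $j=k+b$ and $t=j-\ell$, this already matches the shape of the claimed formula, so the whole theorem reduces to the single assertion that
$$
P(G,t) = \prod_{i=1}^{N} (t-a_i).
$$

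I would prove this factorization by structural induction on the Dyck word $v$, mirroring the three clauses of the inductive definition of $G_v$ and tracking how the multiset of heights transforms under each. The base case $v=xD$ gives $G=K_1$, one vertex of height $0$, and $P(K_1,t)=t=t-0$. For a disjoint union $v=v_1\cdots v_\ell$ the chromatic polynomial multiplies, $P(G,t)=\prod_s P(G_{v_s},t)$, while the heights of the $x$'s in each block $v_s$ are unchanged from their values in $v_s$ read in isolation (the excess returns to $0$ after each completed block), so the product of the inductive factorizations is exactly $\prod_i(t-a_i)$. The one genuinely substantive case is the irreducible step $v=xv'D$, where $G=G_{v'}+K_1$ is obtained by adjoining a dominating vertex $u$. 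Here I would use the identity $P(G_{v'}+K_1,\,t)=t\,P(G_{v'},\,t-1)$, which holds because fixing one of $t$ colors on $u$ forces the rest of $G_{v'}$ to be properly colored from the remaining $t-1$ colors. On the height side, prepending the outer $x$ creates a new vertex of height $0$ and raises every interior height by exactly $1$; thus if $P(G_{v'},t)=\prod_j(t-a'_j)$ then $P(G,t)=t\prod_j\big(t-(a'_j+1)\big)=\prod_i(t-a_i)$, closing the induction.

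The main obstacle is this irreducible step, and specifically the need to verify carefully that the two operations ``adjoin a dominating vertex'' and ``take a disjoint union'' act on heights precisely as a uniform $+1$ shift (together with a new $0$) and as concatenation of height-multisets, respectively. Once these two bookkeeping facts are pinned down, the chromatic-polynomial recurrences $P(G+K_1,t)=t\,P(G,t-1)$ and $P(G_1\cup G_2,t)=P(G_1,t)P(G_2,t)$ make the induction immediate. Everything else --- the inclusion--exclusion inversion and the substitution $j=k+b$, $t=j-\ell$ --- is routine.
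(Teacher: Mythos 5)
Your proposal is correct and is essentially the paper's own proof: the paper likewise reduces the theorem, via Corollary \ref{cor-main_comb_int} and the inclusion--exclusion inversion (\ref{inc-exc}), to the factorization $\chi_{G_{x^awD^b}}(q)=\prod_{i=1}^{m+a}(q-a_i)$, which it establishes as Claim \ref{claim-chrom} by exactly the structural induction you describe (disjoint unions leaving heights unchanged, and the dominating-vertex step contributing a factor $q$ while shifting interior heights up by one). The two bookkeeping facts you flag as needing care are precisely the two observations made in the paper's inductive proof, so there is no gap.
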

For example, if $w=xxDxxDxDDD$ then $m=5$, $a=b=0$, $a_1=0$, $a_2=1$, $a_3=1$, $a_4=2$, $a_5=2$ and
$$
S_w(k) = \frac{1}{k!}\sum_{\ell=0}^k (-1)^\ell{k \choose \ell} (k-\ell)(k-\ell-1)^2(k-\ell-2)^2.
$$
Similar explicit expressions have appeared in \cite{Varvak} and \cite{MendezBlasiakPenson}. Note that when $w=(xD)^n$, Theorem \ref{thm-closedform} immediately reduces to the familiar
$$
{n \brace k} = \frac{1}{k!}\sum_{\ell=0}^k (-1)^\ell {k \choose \ell}(k-\ell)^n.
$$

The proofs of Theorems \ref{thm-main_comb_int} and \ref{thm-closedform} appear in Section \ref{sec-comb_int_new}.

\subsection{A closely related combinatorial interpretation} \label{subsec-Hintro}

By combining a result of Navon \cite{Navon} with one of Goldman, Joichi and White \cite{GoldmanJoichiWhite}, we find another graph $H_w$ that can naturally be associated to a Dyck word $w$, such that $S_w(k) = {H_w \brace k}$ for all $k$. To define this graph, label each unit square in ${\mathbb Z}^2$ with the coordinates of its top-right corner (so, for example, the square with corners at $(0,0)$, $(1,0)$, $(0,1)$ and $(1,1)$ gets label $(1,1)$). Given a Dyck word $w$, let ${\mathcal W}_w$ be the set of (labels of) unit squares that lie below the staircase path of $w$, and completely above the line $x=y$. For example, if $w=(xD)^n$ then ${\mathcal W}_w=\emptyset$, and if $w=xxDxxDxDDD$ then ${\mathcal W}_w=\{(1,2),(2,3),(2,4),(3,4),(3,5),(4,5)\}$. Define a graph $H_w$ on vertex set $\{1, \ldots, n\}$ (where $n$ is the number of $x$'s in $w$) by putting an edge from $i$ to $j$ ($i < j$) if and only if $(i,j) \in {\mathcal W}_w$. (See Figure \ref{fig-graphHw}.)

\begin{figure}[ht!]
\begin{center}
\begin{tikzpicture}[scale=1.25]
    \draw (1,0) -- (1,5);
    \draw (2,1) -- (2,5);
    \draw (3,2) -- (3,5);
    \draw (4,3) -- (4,5);
    \draw (5,4) -- (5,5);
    \draw (0,1) -- (2,1);
    \draw (0,2) -- (3,2);
    \draw (0,3) -- (4,3);
    \draw (0,4) -- (5,4);
    \draw (0,5) -- (5,5);
    \draw [dashed] (-.25,-.25) -- (5.25,5.25);
    \draw [line width=2pt] (0,0) -- (0,2) -- (1,2) -- (1,4) -- (2,4) -- (2,5) -- (5,5);
    \draw [<->,thick] (0,5.5) node (yaxis) [above] {$y$}
    |- (5.5,0) node (xaxis) [right] {$x$};

    \node at (.5,1.5) {$\underline{(1,2)}$};
    \node at (1.5,2.5) {$(2,3)$};
    \node at (1.5,3.5) {$\underline{(2,4)}$};
    \node at (2.5,3.5) {$(3,4)$};
    \node at (2.5,4.5) {$\underline{(3,5)}$};
    \node at (3.5,4.5) {$(4,5)$};

    \node (v1) at (7,2.5) [circle,draw,label=270:$1$] {};
    \node (v2) at (8.25,2.5) [circle,draw,label=270:$2$] {};
    \node (v3) at (9.5,2.5) [circle,draw,label=270:$3$] {};
    \node (v4) at (10.75,2.5) [circle,draw,label=270:$4$] {};
    \node (v5) at (12,2.5) [circle,draw,label=270:$5$] {};
    \draw (v1) -- (v2);
    \draw (v2) -- (v3);
    \draw (v3) -- (v4);
    \draw (v4) -- (v5);
    \path[every node/.style={font=\sffamily\small}] (v2) edge[loop, distance=.5in, out=52.5, in=127.5] node {} (v4);
    \path[every node/.style={font=\sffamily\small}] (v3) edge[loop, distance=.5in, out=52.5, in=127.5] node {} (v5);

\end{tikzpicture}
\caption{The staircase path of $w$ and the graph $H_w$ for $w=xxDxxDxDDD$.}
\label{fig-graphHw}
\end{center}
\end{figure}

It is worth noting that $H_w$ is determined by the locations of the peaks of the Dyck path of $w$, that is, by the places where the path takes a step up followed by a step to the right. To make this precise, say that the Dyck path of $w$ {\em has a peak} at $(x,y)$ if it takes a step from $(x-1,y-1)$ to $(x-1,y)$ and then steps to $(x,y)$. Let ${\mathcal T}_w=\{(x_1,y_1), \ldots, (x_k,y_k)\}$ be the set of peaks of the path of $w$. Then it is easy to see that the edge set of $H_w$ can be covered by putting a clique on each of the consecutive segments $\{x_i, \ldots, y_i\}$, $1 \leq i \leq k$. For example, if $w=(xD)^n$ then ${\mathcal T}_w=\{(1,1),\ldots,(n,n)\}$ and the edge set of $H_w$ is empty; while if $w=xxDxxDxDDD$ then ${\mathcal T}_w=\{(1,2),(2,4),(3,5)\}$, and the edge set of $H_w$ is $\{\{1,2\}\} \cup \{\{2,3\},\{2,4\},\{3,4\}\} \cup \{\{3,4\},\{3,5\},\{4,5\}\}$, i.e., it is composed of cliques on the vertex sets $\{1,2\}$, $\{2,3,4\}$, and $\{3,4,5\}$ (see Figure \ref{fig-graphHw}, where the coordinates of the peaks of the path are underlined). 

We note in passing that $H_w$ belongs to the family of {\em indifference graphs}. Indeed, a characterization of indifference graphs mentioned in \cite{GebhardSagan} is that they are exactly those graphs $H$ on vertex set $\{v_1, \ldots, v_d\}$ for which there is some collection ${\mathcal C}$ of intervals from $\{1, \ldots, d\}$ such that the edge set of $H$ is $\{v_iv_j: i,j~\mbox{are in some element of}~{\mathcal C}\}$. Notice that the graphs $H_w$ and $G_w$ are sometimes isomorphic (for example, when $w=\prod_{i=1}^n x^{t_i}D^{t_i}$ for arbitrary $t_i$'s), but not always (for example, when $w=xxDxxDxDDD$).
\begin{thm} \label{thm-main_comb_int2}
Let $w$ be Dyck word in the alphabet $\{x,D\}$. For all $k \geq 0$ we have
$$
S_w(k) = {H_w \brace k}.
$$
\end{thm}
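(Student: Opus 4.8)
The plan is to move from counting partitions to the chromatic polynomial, where the whole statement collapses to a single polynomial identity. For any graph $G$ the chromatic polynomial satisfies $P(G,x) = \sum_{k \geq 0} {G \brace k}\, x^{\underline{k}}$, since a proper colouring from a palette of $x$ colours is precisely a partition of $V(G)$ into independent classes followed by an injection of those classes into the palette. As $\{x^{\underline{k}}\}_{k \geq 0}$ is a basis of the polynomial ring, proving $S_w(k) = {H_w \brace k}$ for every $k$ is equivalent to proving the single identity $\sum_{k \geq 0} S_w(k)\, x^{\underline{k}} = P(H_w,x)$.

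First I would compute $P(H_w,x)$ and show that it factors into linear terms. Keep the vertex labelling $1, \ldots, n$ of $H_w$, and let $p_j$ denote the number of $D$'s preceding the $j$-th $x$ of $w$. From the description of ${\mathcal W}_w$ one checks that the set $N_{<}(j) = \{i < j : ij \in E(H_w)\}$ of smaller neighbours of $j$ is exactly the block of consecutive integers $\{p_j + 1, \ldots, j-1\}$, and that any two of its elements $a < b$ are themselves adjacent (because $p_b \leq p_j$ gives $a \geq p_j + 1 \geq p_b + 1$, whence $(a,b) \in {\mathcal W}_w$); thus $N_{<}(j)$ is a clique. Colouring the vertices greedily in the order $1, 2, \ldots, n$, vertex $j$ then sees exactly $d_j := |N_{<}(j)| = (j-1) - p_j$ previously coloured neighbours, all of distinct colours, so it can be coloured in $x - d_j$ ways. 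Hence $P(H_w,x) = \prod_{j=1}^n (x - d_j)$, and $d_j$ is precisely the height of the $j$-th $x$ of $w$ in the sense of Definition \ref{defn-height}.

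Next I would bring in the two quoted external results. Navon's rook interpretation gives $S_w(k) = r_{n-k}(B_w)$, the number of placements of $n-k$ non-attacking rooks on the Ferrers board $B_w$ attached to $w$. The Goldman--Joichi--White factorization theorem then rewrites the rook generating function in the falling-factorial basis as a product of linear factors, namely $\sum_{k \geq 0} r_{n-k}(B_w)\, x^{\underline{k}} = \prod_{i=1}^n \left(x - (i - 1 - b_i)\right)$, where $b_1 \leq \cdots \leq b_n$ are the column heights of $B_w$. In view of the first paragraph this is the left-hand side of the identity I want, so by the second paragraph the theorem reduces to the purely combinatorial claim that the two products agree, i.e. that the multiset $\{\, i - 1 - b_i : 1 \leq i \leq n \,\}$ read off from Navon's board equals the multiset $\{\, d_j : 1 \leq j \leq n \,\}$ of backward-degrees of $H_w$.

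This matching is where the real work lies, and it is the step I expect to be the main obstacle. Both multisets ought to equal the multiset of heights $\{a_r\}$ of the $x$'s of $w$, but establishing this means unwinding Navon's board construction and verifying, step by step along the Dyck path, that its diagonal statistic $i - 1 - b_i$ encodes the same information as the staircase region ${\mathcal W}_w$ defining $H_w$; a little care with the orientation and transpose conventions is also needed so that Goldman--Joichi--White is applied to a genuine Ferrers shape. Once the two products are identified, the falling-factorial expansions of $P(H_w,x)$ and of $\sum_k S_w(k)\, x^{\underline{k}}$ agree coefficient by coefficient, which is exactly $S_w(k) = {H_w \brace k}$. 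I remark finally that the computation $P(H_w,x) = \prod_{r} (x - a_r)$ coincides with the product implicit in Theorem \ref{thm-closedform} for $G_w$, so one could instead avoid rook theory altogether and obtain the result from $P(H_w,x) = P(G_w,x)$ together with Theorem \ref{thm-main_comb_int}.
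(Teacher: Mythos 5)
You do have a correct proof, but it is the one in your closing remark, not your primary route; and that closing argument is precisely the paper's own second (``direct'') proof of the theorem. Concretely, what you establish in full is: the basis fact $\chi_G(x)=\sum_{k\geq 0}{G \brace k}x^{\underline{k}}$, reducing the theorem to the single identity $\sum_{k}S_w(k)x^{\underline{k}}=\chi_{H_w}(x)$; the factorization $\chi_{H_w}(x)=\prod_{j=1}^{n}(x-a_j)$, whose proof is sound (your observation that the lower neighbourhood $N_{<}(j)=\{p_j+1,\ldots,j-1\}$ is a clique means the already-coloured neighbours of $j$ always carry distinct colours, so the greedy count $x-d_j$ at step $j$ is independent of all earlier choices); and the conclusion via $\chi_{G_w}(x)=\prod_{j}(x-a_j)$ (Claim \ref{claim-chrom}), the implication (\ref{lem-chromaticpoly}), and Theorem \ref{thm-main_comb_int}, giving $S_w(k)={G_w \brace k}={H_w \brace k}$. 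The paper presents exactly this argument immediately after its main proof, the only cosmetic difference being that it colours $H_w$ clique-by-clique along the peaks of the Dyck path rather than vertex-by-vertex.

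Your primary route, by contrast, contains the genuine gap you yourself flag, and it stems from invoking a different Goldman--Joichi--White theorem than the paper does. Using their factorization theorem for Ferrers boards leaves you needing the multiset identity $\{i-1-b_i : 1\leq i\leq n\}=\{a_j : 1\leq j\leq n\}$, which is true but which you do not prove (it requires an honest Dyck-path argument, plus the reindexing caused by sorting the columns of ${\mathcal F}_w$ into increasing order). The paper's main proof sidesteps factorization altogether: it cites the Rook Theory III result $r_k(B)={\Gamma_n(B) \brace n-k}$ for proper boards, which, applied to the reflection of ${\mathcal F}_w$ across the diagonal, converts Navon's rook counts directly into partition counts of $H_w$, so no linear factors ever need to be matched. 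In short, your completed argument duplicates the paper's alternate proof, while the paper's main proof shows how your stalled rook-theoretic route could be closed by choosing the other form of the Goldman--Joichi--White machinery.
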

As in Section \ref{subsec-combint1}, the following is an immediate corollary of Theorem \ref{thm-main_comb_int2}.
\begin{cor} \label{cor-main_comb_int2}
Let $w$ be an arbitrary word in the alphabet $\{x,D\}$, and let $x^awD^b$ be its associated Dyck word. For all $k \geq 0$ we have
$$
S_w(k) = {H_{x^awD^b} \brace k+b}.
$$
\end{cor}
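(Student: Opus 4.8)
The plan is to reduce the statement to Theorem \ref{thm-main_comb_int2} by exactly the same shift that was used to pass from Theorem \ref{thm-main_comb_int} to Corollary \ref{cor-main_comb_int}. The key observation is that $x^awD^b$ is, by construction, a Dyck word, so Theorem \ref{thm-main_comb_int2} applies to it directly: for every non-negative integer $j$ we have
$$
S_{x^awD^b}(j) = {H_{x^awD^b} \brace j}.
$$
It therefore suffices to relate the Stirling sequence of $w$ to that of its associated Dyck word, and then substitute $j=k+b$.

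That relation has already been recorded in the discussion preceding Corollary \ref{cor-main_comb_int}, and I would simply reuse it. Briefly, writing $w = x^{m-n}\sum_{k \geq 0} S_w(k)x^kD^k$ as in (\ref{eq-weyl_general}) (with $m$ $x$'s and $n$ $D$'s) and then left- and right-multiplying by $x^a$ and $D^b$ respectively, the equality $a+m=b+n$ — which holds precisely because $x^awD^b$ is a Dyck word — collapses the prefactor and yields $x^awD^b = \sum_{k \geq 0}S_w(k)x^{k+b}D^{k+b}$. Comparing this with the normal order $x^awD^b = \sum_{k\geq 0}S_{x^awD^b}(k)x^kD^k$ and invoking the uniqueness of the expansion (\ref{eq-weyl_general}) gives the shift
$$
S_w(k) = S_{x^awD^b}(k+b).
$$

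Combining the two displays, with $j=k+b$, gives $S_w(k) = S_{x^awD^b}(k+b) = {H_{x^awD^b} \brace k+b}$, which is the claim. I do not anticipate any real obstacle: all the genuine combinatorial content lives in Theorem \ref{thm-main_comb_int2}, and the corollary is pure bookkeeping built on a known algebraic identity. The only point needing a word of care is the verification that $a+m=b+n$, but this is immediate from Definition \ref{defn-associated-word}, since a Dyck word has equally many $x$'s and $D$'s.
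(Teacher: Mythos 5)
Your proposal is correct and matches the paper's own argument exactly: the paper also derives the corollary by combining the shift identity $S_w(k)=S_{x^awD^b}(k+b)$ (established in the discussion preceding Corollary \ref{cor-main_comb_int}) with Theorem \ref{thm-main_comb_int2} applied to the Dyck word $x^awD^b$. Nothing is missing.
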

We give the proof of Theorem \ref{thm-main_comb_int2} in Section \ref{sec-H}, where we also give a direct proof (not using the results of \cite{Navon} and \cite{GoldmanJoichiWhite}) of the identity ${H_w \brace k}={G_w \brace k}$ for all Dyck words $w$ and $k \geq 0$.

\subsection{A new summation formula for $S_w(k)$ when $w=(x^sD^s)^n$}

All explicit expressions for $S_w(k)$ that have appeared in the literature have taken the form of alternating sums. Using Theorem \ref{thm-main_comb_int}, we can obtain a new expression for $S_w(k)$, in the special case $w=(x^sD^s)^n$, as a positive linear combination of ordinary Stirling numbers. In what follows we use $[x^\ell]p(x)$ for the coefficient of $x^\ell$ in the polynomial $p(x)$. The proof of the following theorem is given in Section \ref{sec-summ}.
\begin{thm} \label{thm-summ}
Let $w=(x^sD^s)^n$. For each $k \geq 0$ we have
$$
S_w(k) = \sum_{\ell = 0}^{(s-1)(n-1)} f(n,s,\ell) {s(n-1)+1-\ell \brace k-(s-1)}
$$
where
\begin{equation} \label{fnsl}
f(n,s,\ell) = \sum_{i_1+\ldots +i_{s-1} = \ell} \binom{n-1}{i_1} \binom{2(n-1)-i_1}{i_2} \ldots \binom{(s-1)(n-1)-i_1-\ldots -i_{s-2}}{i_{s-1}}.
\end{equation}
\end{thm}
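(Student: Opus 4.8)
The plan is to combine Theorem \ref{thm-main_comb_int} with the classical link between graph Stirling numbers and the chromatic polynomial. By Theorem \ref{thm-main_comb_int} we have $S_w(k)={G_w \brace k}$, and for $w=(x^sD^s)^n$ the graph $G_w$ is the disjoint union of $n$ copies of $K_s$. For any graph $G$, the number of proper colorings of $G$ from a palette of $x$ colors (the chromatic polynomial $P(G;x)$) satisfies $P(G;x)=\sum_{k\ge 0}{G \brace k}x^{\underline k}$, since a proper coloring is precisely a partition into some number $k$ of nonempty independent classes together with an injection of those classes into the palette. As the chromatic polynomial is multiplicative over connected components and $P(K_s;x)=x^{\underline s}$, this gives $\sum_{k\ge 0}S_w(k)\,x^{\underline k}=(x^{\underline s})^n$. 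Because the falling factorials $\{x^{\underline k}\}_{k\ge 0}$ form a basis of the polynomial ring, the theorem is equivalent to the single polynomial identity obtained by substituting the claimed right-hand side for $S_w(k)$; I would prove that identity.

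I would then isolate, as the combinatorial heart of the argument, the generating-function evaluation
$$
\sum_{\ell\ge 0} f(n,s,\ell)\,z^\ell=\prod_{i=1}^{s-1}(1+iz)^{n-1}.
$$
This is where the precise nested-binomial shape of $f(n,s,\ell)$ enters, and I expect it to be the main obstacle. The clean way to see it is to read $f(n,s,\ell)$ as a sequential selection: picture $n-1$ distinguishable tokens being introduced at each of levels $1,\dots,s-1$, so that $t(n-1)$ tokens have appeared by level $t$; the factor $\binom{t(n-1)-(i_1+\cdots+i_{t-1})}{i_t}$ then counts the ways to mark $i_t$ hitherto-unmarked tokens at level $t$. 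Thus $\prod_{t}\binom{t(n-1)-(i_1+\cdots+i_{t-1})}{i_t}$ counts markings of the $(s-1)(n-1)$ tokens in which each token is either left unmarked or marked at exactly one level at or after its introduction, and $f(n,s,\ell)$ is the number of such markings with exactly $\ell$ tokens marked. Weighting each marked token by $z$ and factoring the count over tokens, a token introduced at level $t$ contributes $1+(s-t)z$ (it may be marked at any of the $s-t$ levels $t,\dots,s-1$, or not at all); since $n-1$ tokens enter at each level, the product over levels is $\prod_{t=1}^{s-1}(1+(s-t)z)^{n-1}=\prod_{i=1}^{s-1}(1+iz)^{n-1}$, as claimed.

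With this identity in hand the rest is routine algebra. Reindexing $j=k-(s-1)$ in the proposed right-hand side and using $x^{\underline{j+s-1}}=x^{\underline{s-1}}(x-s+1)^{\underline j}$ together with the defining relation $\sum_{j}{M \brace j}y^{\underline j}=y^{M}$ (with $M=s(n-1)+1-\ell$ and $y=x-s+1$), I would collapse the double sum to
$$
\sum_{k}\left(\sum_\ell f(n,s,\ell){s(n-1)+1-\ell \brace k-(s-1)}\right)x^{\underline k}=x^{\underline{s-1}}\sum_\ell f(n,s,\ell)\,(x-s+1)^{s(n-1)+1-\ell}.
$$
Substituting $y=x-s+1$, noting $x^{\underline{s-1}}=\prod_{i=1}^{s-1}(y+i)$ and $s(n-1)+1-\ell=n+\big((s-1)(n-1)-\ell\big)$, the right-hand side becomes $\prod_{i=1}^{s-1}(y+i)\cdot y^{n}\sum_\ell f(n,s,\ell)\,y^{(s-1)(n-1)-\ell}$. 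Reversing the coefficients in the generating-function identity above yields $\sum_\ell f(n,s,\ell)\,y^{(s-1)(n-1)-\ell}=\prod_{i=1}^{s-1}(y+i)^{n-1}$, so the expression equals $y^{n}\prod_{i=1}^{s-1}(y+i)^{n}=(x^{\underline s})^{n}$. Matching this against $\sum_{k}S_w(k)\,x^{\underline k}=(x^{\underline s})^{n}$ and invoking linear independence of the falling factorials completes the proof.
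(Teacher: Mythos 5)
Your proposal is correct, and it takes a genuinely different route from the paper's, although both share the same starting point: Theorem \ref{thm-main_comb_int} gives $S_w(k)={nK_s \brace k}$, and the chromatic polynomial enters via $\chi_G(x)=\sum_k {G \brace k}x^{\underline{k}}$. From there the paper argues constructively: using (\ref{lem-chromaticpoly}) (chromatically equivalent graphs have equal graph Stirling numbers), it repeatedly trades a disjoint union of cliques plus isolated vertices for a ``fan'' of cliques sharing a single vertex plus extra isolated vertices, and then peels the fan apart one center at a time; each peeling step forces a choice of which isolated vertices join the center's block, each such choice contributes one binomial factor, the nested sum (\ref{fnsl}) emerges directly, and the ordinary Stirling number appears at the end when only isolated vertices remain. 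The product formula (\ref{fnsl2}) is proved afterwards, as a separate supplementary fact, by a double count of $\ell$-selections. You invert this structure: you make (\ref{fnsl2}) the key lemma --- your token-marking argument is essentially equivalent to the paper's $\ell$-selection count, with your ``level $t$'' playing the role of the paper's set $A_{s-t}$ --- and then you \emph{verify} the theorem as a polynomial identity in the falling-factorial basis, collapsing the inner sum via $\sum_j {M \brace j}y^{\underline{j}}=y^M$, reversing coefficients to turn $\prod_{i=1}^{s-1}(1+iz)^{n-1}$ into $\prod_{i=1}^{s-1}(y+i)^{n-1}$, and matching against $\chi_{nK_s}(x)=\left(x^{\underline{s}}\right)^n$; linear independence of the $x^{\underline{k}}$ finishes it. What the paper's route buys is an explanation of where the formula comes from: each binomial factor has a concrete combinatorial meaning (partnering a fan center with a set of isolated vertices), and no algebraic manipulation of the formula is ever needed. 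What your route buys is brevity and mechanical checkability once the generating function for $f(n,s,\ell)$ is in hand, plus it isolates the degree-reversal of that product as the real content of the identity; the only conventions you rely on (such as ${M \brace j}=0$ for $j<0$) are standard. Both are complete proofs.
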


In Section \ref{sec-summ} we also establish the following alternate expression for $f(n,s,\ell)$:
\begin{equation} \label{fnsl2}
f(n,s,\ell) = [x^\ell]\left((1+x)(1+2x)\ldots (1+(s-1)x)\right)^{n-1}.
\end{equation}
The (unsigned) Stirling number of the first kind ${a \brack b}$ counts the number of permutations of $a$ symbols that decompose into exactly $b$ cycles. Using a well known identity satisfied by the Stirling numbers of the first kind, (\ref{fnsl2}) immediately gives the following nice connection between generalized Stirling numbers of the second kind, and ordinary Stirling numbers of the first kind:
$$
f(n,s,\ell) = [x^\ell]\left(\sum_{j=0}^{s-1} {s \brack s-j} x^j \right)^{n-1}.
$$

The {\em Bell number} $B(w)$ of a word $w$ is defined by $B(w) = \sum_k S_w(k)$ (so the Bell number of the word $(xD)^n$ is $B_n$, the $n^{th}$ ordinary Bell number, counting the number of partitions of a set of size $n$ into non-empty classes). From Theorem \ref{thm-summ} we easily obtain
\begin{equation} \label{eq-Bell}
B((x^sD^s)^n) = \sum_{\ell = 0}^{(s-1)(n-1)} f(n,s,\ell) B_{s(n-1)+1-\ell}.
\end{equation}
In \cite{BlasiakPensonSolomon2} the comment is made that the Bell number $B((x^sD^s)^n)$ can always be expressed in terms of conventional Bell
numbers and $r$-nomial (binomial, trinomial, $\ldots$) coefficients, and the illustrative example $B((x^2D^2)^n) = \sum_{\ell = 0}^{n-1} \binom{n-1}{\ell} B_{2n-1-\ell}$ is given; (\ref{eq-Bell}) illustrates this comment explicitly for arbitrary $s$.

\subsection{Increasing forests}\label{sec-introforests}

An {\em $r$-ary tree} is a tree in which every vertex, including a designated root, has some number $i$ ($0 \leq i \leq r$) of children, with the set of children equipped with a bijection to some subset of a fixed set of size $r$ (when $r=2$ this set is often taken to be $\{\mbox{left},\mbox{right}\}$, for example, and for $r=3$ it might be $\{\mbox{left},\mbox{middle},\mbox{right}\}$; for general $r$ we take it to be $\{1, \ldots r\}$). An {\em $r$-ary forest} is a forest in which each component is an $r$-ary tree. An {\em increasing $r$-ary forest} is an $r$-ary forest on, say, $n$ vertices, together with a bijection from the vertices to $\{1, \ldots, n\}$, with the property that the labels go in increasing order when read along any path starting from a root vertex of a component.

Let $F(r,n,k)$ denote the set of increasing $r$-ary forests with $n$ vertices and $k$ components. It is easy to see that $|F(1,n,k)| = {n \brace k}$; in other words, writing $w(r,n)$ for the word $(x^rD)^n$, we have $|F(1,n,k)| = S_{w(1,n)}(k)$. More generally, Mendez et al \cite[Section 5]{MendezBlasiakPenson} have shown that the Stirling sequence of $(x^rD)^n$ enumerates increasing $r$-ary forests with $n$ vertices by number of components; specifically, for all $n$, $r$ and $k$,
\begin{equation} \label{M-for}
|F(r,n,k)| = S_{w(r,n)}(k).
\end{equation}

Corollaries \ref{cor-main_comb_int} and \ref{cor-main_comb_int2} provide alternate combinatorial interpretations of $S_w(k)$, in the case $w=w(r,n)$, that are quite appealing. Indeed, in this case $G_w$ is simply the threshold graph obtained from the graph on no vertices by iterating $n$ times the operation of adding an isolated vertex and then adding $r-1$ dominating vertices, and $H_w$ is the graph on vertex set $\{1, \ldots, rn\}$, with edges covered by cliques on vertices $i$ through $ir$, for $1 \leq i \leq n$ (see Figures \ref{fig-G(n,r)}, \ref{fig-Gw} and \ref{fig-Hw} in Section \ref{sec-forests}). We refer to these graphs as $G(n,r)$ and $H(n,r)$ respectively. The following identities follow immediately from Theorems \ref{thm-main_comb_int} and \ref{thm-main_comb_int2}, via (\ref{M-for}):
\begin{eqnarray*}
{G(n,r) \brace k + (r-1)n} & = & |F(r,n,k)| \\
{H(n,r) \brace k + (r-1)n} & = & |F(r,n,k)|.
\end{eqnarray*}
In Section \ref{sec-forests} we give combinatorial proofs of both of these identities, by exhibiting bijections from the set of increasing $r$-ary forests with $n$ vertices and $k$ components to the set of partitions of both $G(n,r)$ and $H(n,r)$ into $k + (r-1)n$ non-empty independent sets.

\subsection{$q$-analogs} \label{subsec-q}

The ordinary Weyl algebra on alphabet $\{x,D\}$ is generated by the relation $Dx=xD+1$. The {\em $q$-deformed Weyl algebra} is instead generated by the relation $Dx=qxD+1$ (where $q$ is an indeterminate). This relation has been studied, for example, in the context of quantum harmonic oscillators \cite{ArikCoon}. If $w$ is a word in the $q$-deformed Weyl algebra, with $m$ $x$'s and $n$ $D$'s, then we have the following analog of the normal order equation (\ref{eq-weyl_general}) (again easily verified by induction, and again unique)
\begin{equation} \label{eq-weyl_general_q}
w = x^{m-n} \sum_{k \geq 0} S^q_w(k) x^kD^k,
\end{equation}
where now $S^q_w(k)$ is a polynomial in $q$ with non-negative integer coefficients, that evaluates to $S_w(k)$ when $q=1$.

In the case $w=(xD)^n$, (\ref{eq-weyl_general_q}) leads to a $q$-analog of the Stirling numbers of the second kind. From a combinatorial perspective, a more natural way to define a $q$-analog of the Stirling numbers is through the recurrence
$$
{n \brace k}_q = \left\{
\begin{array}{ll}
q^{k-1}{n-1 \brace k-1}_q + [k]_q{n-1 \brace k}_q & \mbox{if both $n>0$ and $k>0$, and} \\
{\bf 1}_{\{n=k\}} & \mbox{if $nk=0$}
\end{array}
\right.
$$
where $[k]_q$ is, as usual, the polynomial $1+q+\ldots q^{k-1}$. This formulation goes back to Carlitz \cite{Carlitz3}.
Happily, these two refinements of the Stirling numbers coincide: for all $n$, $k$ and $q$ we have $S^q_{(xD)^n}(k) = {n \brace k}_q$ (with the proof of this following the lines of the proof of (\ref{eq-Stirling-as-op})).

Varvak \cite{Varvak} extended Navon's combinatorial interpretation of $S_w(k)$ for arbitrary $w$ to a combinatorial interpretation of (the coefficients of) $S^q_w(k)$. We are also able to extend our interpretation (from Section \ref{subsec-combint1}) to the realm of the $q$-deformed Weyl algebra. Specifically, given a Dyck word $w$, by Theorem \ref{thm-main_comb_int} there is a quasi-threshold graph $G_w$ with the property that ${G_w \brace k} = S_w(k)$. Let ${\mathcal P}(w,k)$ be the set of partitions of $G_w$ into $k$ non-empty independent sets. We show now how to associate a weight ${\rm wt}(P)$ to each $P \in {\mathcal P}(w,k)$ so that $\sum_{P \in {\mathcal P}(w,k)} q^{{\rm wt}(P)} = S_w^q(k)$.

Not unexpectedly, given the inductive construction of $G_w$, the definition of ${\rm wt}(P)$ will be inductive. It will also depend on a predetermined total order on the set of independent sets of $G_w$ (which may be chosen arbitrarily); note that such a total order naturally induces a total order on the set of independent sets of any subgraph of $G_w$. Before giving the definition, we make some preliminary observations.

For $w=xD$, the only value of $k$ for which ${\mathcal P}(w,k) \neq \emptyset$ is $k=1$, and there is a unique partition in ${\mathcal P}(w,1)$.
If $w=xw'D$ is irreducible, then $G_w$ can be written as $G_{w'}+K_1$. Let $v$ be the dominating vertex in this decomposition. For each $k$ for which ${\mathcal P}(w,k) \neq \emptyset$, each $P \in {\mathcal P}(w,k)$ must include $v$ as a singleton part, and the map $\varphi: {\mathcal P}(w,k) \rightarrow {\mathcal P}(w',k-1)$ that removes that singleton part is a bijection.

If $w=w_1w_2\ldots w_\ell$ is reducible (with each $w_i$ irreducible), then $G_w$ can be written as the disjoint union of $G_{w_1}$ and $G_{w_2\ldots w_\ell}$. Fix a $k$ for which ${\mathcal P}(w,k) \neq \emptyset$, and consider $P \in {\mathcal P}(w,k)$. There are numbers $r, s$ such that exactly $r$ of the parts of $P$ have non-empty intersection with $G_{w_1}$, exactly $s$ of them have non-empty intersection with $G_{w_2\ldots w_\ell}$, and exactly $(r+s)-k$ of them have non-empty intersection with both $G_{w_1}$ and $G_{w_2\ldots w_\ell}$. By projection $P$ induces partitions $P_1 \in {\mathcal P}(w_1,r)$ with parts $x_1, \ldots, x_r$ (written in increasing order with respect to the total order on independent sets) and $P_2 \in {\mathcal P}(w_2\ldots w_\ell,s)$ with parts $y_1, \ldots, y_s$ (also in increasing order). We may encode how $P$ is constructed from $P_1$ and $P_2$ using an $r$ by $s$ matrix whose $ij$ entry is $1$ if $x_i \cup y_j$ is one of the parts of $P$, and $0$ otherwise. Call this matrix $M$; note that it contains a dimension $(r+s)-k$ permutation matrix as a minor, and all other entries are $0$. When we come to define ${\rm wt}(P)$ 
in the reducible case, we will need to associate a weight to this matrix $M$.

\begin{defn} \label{def-f(M)}
Let $M$ be an $r$ by $s$ matrix that contains a dimension $(r+s)-k$ permutation matrix as a minor, and has all other entries $0$. Mark all of the $0$'s in $M$ that are either below a $1$ (in the same column) or to the right of a $1$ (in the same row). The {\em weight} of $M$, denoted $f(M)$, is the number of unmarked $0$'s in $M$.
\end{defn}
See Figure \ref{fig-q analog} in Section \ref{sec-q} for an example $M$.

We are now in a position to define the weight of a partition. For $w$ a Dyck word, and $P$ a partition of $G_w$ into non-empty independent sets, associate a weight ${\rm wt}(P)$ to $P$ inductively as follows.
\begin{enumerate}
\item If $w=xD$, then ${\rm wt}(P)=0$.
\item If $w$ is irreducible with $w=xw'D$ for some non-empty Dyck word $w'$, then ${\rm wt}(P) = {\rm wt}(\varphi(P))$.
\item If $w$ is reducible, say $w=w_1\ldots w_\ell$ with each $w_i$ irreducible, then
$$
{\rm wt}(P) = {\rm wt}(P_1) + {\rm wt}(P_2) + f(M).
$$
\end{enumerate}

We prove the following result in Section \ref{sec-q}.

\begin{thm} \label{thm-q}
For each Dyck word $w$ and each $k\geq 0$, we have
$$
\sum_{P \in {\mathcal P}(w,k)} q^{{\rm wt}(P)} = S_w^q(k).
$$
\end{thm}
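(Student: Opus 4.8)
The plan is to induct on the length of the Dyck word $w$, following the inductive definitions of $G_w$ and of ${\rm wt}(P)$ in lockstep. Write $T_w(k) = \sum_{P \in {\mathcal P}(w,k)} q^{{\rm wt}(P)}$; the goal is $T_w(k) = S_w^q(k)$ for all $k$. The base case $w=xD$ is immediate: the only nonempty ${\mathcal P}(w,k)$ is for $k=1$, its single partition carrying weight $0$, while normal-ordering gives $S_{xD}^q(1)=1$. For the irreducible case $w=xw'D$, the weight-preserving bijection $\varphi$ reduces $T_w(k)$ to $T_{w'}(k-1)$; on the algebraic side, since $w'$ is a Dyck word its normal order is $\sum_j S^q_{w'}(j)\,x^j D^j$, and passing to $x\bigl(\sum_j S^q_{w'}(j)\,x^jD^j\bigr)D = \sum_j S^q_{w'}(j)\,x^{j+1}D^{j+1}$ requires no reordering, so $S_w^q(k) = S_{w'}^q(k-1)$. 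The inductive hypothesis then closes this case.

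The heart of the argument is the reducible case $w = w_1(w_2\cdots w_\ell)$, where $G_w = G_{w_1} \sqcup G_{w_2\cdots w_\ell}$. Combinatorially, each $P \in {\mathcal P}(w,k)$ is encoded uniquely by a triple $(P_1,P_2,M)$ with $P_1 \in {\mathcal P}(w_1,r)$, $P_2 \in {\mathcal P}(w_2\cdots w_\ell,s)$, and $M$ an $r\times s$ partial permutation matrix with $i=r+s-k$ ones; since ${\rm wt}(P) = {\rm wt}(P_1)+{\rm wt}(P_2)+f(M)$, summing over all $P$ factors as
\[
T_w(k) = \sum_{r+s-i=k} T_{w_1}(r)\,T_{w_2\cdots w_\ell}(s)\,G_{r,s,i}, \qquad G_{r,s,i} := \sum_M q^{f(M)},
\]
the inner sum ranging over all $r\times s$ partial permutation matrices with $i$ ones. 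Algebraically, normal-ordering the product $\bigl(\sum_r S^q_{w_1}(r)x^rD^r\bigr)\bigl(\sum_s S^q_{w_2\cdots w_\ell}(s)x^sD^s\bigr)$ requires commuting $D^r$ past $x^s$; writing $D^r x^s = \sum_i c_i^{(r,s)} x^{s-i}D^{r-i}$ one gets $x^rD^rx^sD^s = \sum_i c_i^{(r,s)} x^{r+s-i}D^{r+s-i}$ and hence $S_w^q(k) = \sum_{r+s-i=k} S^q_{w_1}(r)\,S^q_{w_2\cdots w_\ell}(s)\,c_i^{(r,s)}$. By induction $T_{w_1}=S^q_{w_1}$ and $T_{w_2\cdots w_\ell}=S^q_{w_2\cdots w_\ell}$, so everything reduces to the key identity $G_{r,s,i}=c_i^{(r,s)}$.

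I would prove this key identity by showing both sides obey the same recursion in $s$. On the algebraic side, factoring $D^r x^s = (D^r x^{s-1})\,x$ and using the easily-checked relation $D^p x = q^p x D^p + [p]_q D^{p-1}$ (obtained by iterating $Dx=qxD+1$) yields
\[
c_i^{(r,s)} = q^{r-i}\,c_i^{(r,s-1)} + [r-i+1]_q\,c_{i-1}^{(r,s-1)}.
\]
On the combinatorial side I would condition on the largest column of $M$ under the order induced by the chosen total order on independent sets. If that column is empty, deleting it leaves an $r\times(s-1)$ partial permutation matrix with $i$ ones, and exactly the $r-i$ cells of the last column lying in rook-free rows stay unmarked, contributing $q^{r-i}$. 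If the last column carries a rook in row $t$, deleting the column leaves an $r\times(s-1)$ matrix with $i-1$ ones and row $t$ empty; a cell of the last column is then unmarked precisely when it lies in a rook-free row above row $t$, so summing $q$ to this count over all $r-i+1$ choices of $t$ among the rows left empty by the other $i-1$ rooks gives exactly $[r-i+1]_q=\sum_{j=0}^{r-i}q^j$. With the matching base cases $G_{r,0,0}=1=c_0^{(r,0)}$ (and both vanishing for $i>0$), this proves $G_{r,s,i}=c_i^{(r,s)}$ and completes the induction.

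The main obstacle is precisely this combinatorial half of the key identity: one must verify that the marking rule of Definition \ref{def-f(M)} interacts with the two cases (empty versus occupied last column) so as to reproduce the factors $q^{r-i}$ and $[r-i+1]_q$ of the algebraic recursion, the latter emerging as a clean $q$-integer only because summing over the rook-free rows of the reduced matrix respects their linear order. This also explains why the construction is insensitive to the arbitrary total order: relabeling rows or columns merely permutes the set of partial permutation matrices over which $G_{r,s,i}$ sums, leaving the sum—and hence $T_w(k)$—unchanged. A minor secondary point is to confirm that $P \leftrightarrow (P_1,P_2,M)$ is a genuine bijection with $M$ always a partial permutation matrix, which holds because two distinct blocks on one side cannot both merge with a common block on the other.
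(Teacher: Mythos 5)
Your proposal is correct, and while its outer skeleton (induction on the length of $w$; base case $xD$; the irreducible case via the weight-preserving bijection $\varphi$ together with $S^q_{xw'D}(k)=S^q_{w'}(k-1)$; the reducible case via encoding each partition as a triple $(P_1,P_2,M)$) coincides with the paper's — this much is essentially forced by the inductive definition of ${\rm wt}$ — your handling of the key step in the reducible case is genuinely different. The paper evaluates both sides of that step in closed form: Claim \ref{claim-unionq} rests on an explicit $q$-Leibniz formula
$$
x^rD^rx^sD^s=\sum_{j\geq 0}q^{(r-j)(s-j)}\frac{\left([r]_q\right)^{\underline{j}}\left([s]_q\right)^{\underline{j}}}{[j]_q!}\,x^{r+s-j}D^{r+s-j},
$$
proved by induction, and Lemma \ref{lem-q-matching} evaluates $\sum_M q^{f(M)}$ as $q^{(r-k)(s-k)}\binom{r}{k}_q\binom{s}{k}_q[k]_q!$ by an explicit bijection from matchings of $K_{r,s}$ to triples (binary string, binary string, permutation) under which $f(M)$ decomposes as $(r-k)(s-k)+{\rm zeros}(\sigma)+{\rm zeros}(\tau)+{\rm inv}(\pi)$. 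You compute neither side: you characterize the normal-ordering coefficients $c_i^{(r,s)}$ and the matching sums $G_{r,s,i}$ by the common recursion $A_i^{(r,s)}=q^{r-i}A_i^{(r,s-1)}+[r-i+1]_qA_{i-1}^{(r,s-1)}$ with identical base cases at $s=0$. I checked both halves of your recursion — the algebraic one via $D^px=q^pxD^p+[p]_qD^{p-1}$, and the combinatorial one, where placing the last-column rook in the $j^{th}$ lowest-indexed rook-free row leaves exactly $j-1$ unmarked zeros in that column, so the occupied case sums to $[r-i+1]_q$ and the empty case contributes $q^{r-i}$ — and both are right; the only unstated point, easily supplied, is that deleting the last column disturbs no markings in the remaining columns (a $1$ in the last column can only mark cells below it). Your route is more elementary and self-contained, avoiding $q$-binomial coefficients and the zeros/inversion statistics entirely; what it gives up is the closed form itself, which in the paper's version is a $q$-enumeration of matchings of $K_{r,s}$ by the statistic $f$ that is of independent interest and makes the specialization to the $q=1$ argument (Claim \ref{claim-union}) transparent.
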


While the construction above may seem involved, the following example illustrates a subtlety of $q$-analogs that must be captured by any interpretation. For $w=xDxDxD$, we have $S_{w}^q(2) = 2q+q^2$, despite the fact that all three partitions of the vertices of $G_w=E_3$ into two nonempty independent sets are isomorphic.

\section{Interpreting $S_w(k)$ in terms of partitions of $G_w$} \label{sec-comb_int_new}

We begin with the proof of Theorem \ref{thm-main_comb_int}, which depends on the following two claims.
\begin{claim} \label{claim-dom}
Let $w'$ be a word in the alphabet $\{x, D\}$, and let $G'$ be a graph with the property that for all $k \geq 0$,
$$
S_{w'}(k) = {G' \brace k}.
$$
Let $w=xw'D$ and let $G$ be obtained from $G'$ by adding a dominating vertex.
For all $k \geq 0$,
$$
S_w(k) = {G \brace k}.
$$
\end{claim}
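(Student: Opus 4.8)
The plan is to establish two parallel recurrences, one algebraic and one combinatorial, and then to chain them together through the hypothesis. First I would determine exactly how passing from $w'$ to $w=xw'D$ transforms the Stirling sequence. Writing $m'$ and $n'$ for the numbers of $x$'s and $D$'s in $w'$, the normal-order identity (\ref{eq-weyl_general}) gives $w' = x^{m'-n'}\sum_{k \geq 0} S_{w'}(k) x^k D^k$. Substituting this into $w=xw'D$, the leftmost $x$ merges with the prefactor $x^{m'-n'}$ and the trailing $D$ merges with each $D^k$, producing
$$
w = x^{m'-n'+1}\sum_{k \geq 0} S_{w'}(k)\, x^k D^{k+1}.
$$
Since $w$ has $m'+1$ $x$'s and $n'+1$ $D$'s, its prefactor in (\ref{eq-weyl_general}) is again $x^{m'-n'}$; absorbing one power of $x$ into the summand and reindexing with $j=k+1$ yields $w = x^{m'-n'}\sum_{j \geq 1} S_{w'}(j-1)\, x^j D^j$. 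Uniqueness of the normal-order expansion then forces $S_w(k) = S_{w'}(k-1)$ for all $k \geq 1$, with $S_w(0)=0$.

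Next I would treat the combinatorial side, proving ${G \brace k} = {G' \brace k-1}$. Let $v$ be the dominating vertex adjoined to $G'$. Because $v$ is adjacent to every vertex of $G'$, the block of any independent-set partition of $G$ that contains $v$ can contain no other vertex; hence $v$ forms a singleton block in every valid partition. Deleting this singleton gives a bijection between partitions of $V(G)$ into $k$ non-empty independent sets and partitions of $V(G')$ into $k-1$ non-empty independent sets, using that $G'$ is an induced subgraph of $G$ so that independence is inherited. This bijection delivers the desired identity, including the boundary cases (for $k=0$ both sides vanish, matching $S_w(0)=0$).

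Finally I would combine the three ingredients:
$$
S_w(k) = S_{w'}(k-1) = {G' \brace k-1} = {G \brace k},
$$
where the middle equality is the hypothesis of the claim and the outer two are the recurrences established above.

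I expect the only delicate point to be the algebraic step: one must carefully carry the prefactor $x^{m'-n'}$ (which may be a formal negative power of $x$) through the merge-and-reindex manipulation, and invoke uniqueness of the normal order correctly, so that the index shift $S_w(k)=S_{w'}(k-1)$ is genuinely justified rather than merely asserted. The combinatorial half is essentially immediate once one observes that a dominating vertex is forced to be a singleton in every admissible partition.
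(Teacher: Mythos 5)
Your proposal is correct and follows essentially the same route as the paper: both establish the algebraic shift $S_w(k)=S_{w'}(k-1)$ from the normal-order expansion of $w'$, and the combinatorial shift ${G \brace k}={G' \brace k-1}$ from the observation that the dominating vertex is forced to be a singleton block, then chain the two through the hypothesis. The only difference is cosmetic: the paper substitutes $f'$ for $f$ and reads off coefficients, while you compose operators and invoke uniqueness of the normal-order expansion, incidentally carrying the prefactor $x^{m'-n'}$ more explicitly than the paper's proof does.
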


\begin{proof}
Using
$$
w' f(x) = \sum_{k \geq 0} {G' \brace k} x^k D^k f(x)
$$
we easily get (applying the above with $f(x)$ replaced by $f'(x)$, and setting ${G' \brace -1}=0$)
$$
xw'D f(x) = \sum_{k \geq 0} {G' \brace k-1} x^k D^k f(x).
$$
The proof is completed by noting that ${G \brace k} = {G' \brace k-1}$ for all $k \geq 0$, since in any partition of $G$ into non-empty independent sets, the dominating vertex added in going from $G'$ to $G$ must form a singleton block.
\end{proof}

\begin{claim} \label{claim-union}
Let $w_1, w_2$ be words in the alphabet $\{x, D\}$, and let $G_1$, $G_2$ be graphs with the property that for each $i \in \{1,2\}$ and all $k \geq 0$,
\begin{equation} \label{base2}
S_{w_i}(k) = {G_i \brace k}.
\end{equation}
For all $k \geq 0$,
$$
S_{w_1w_2}(k) = {G_1 \cup G_2 \brace k}
$$
where $w_1w_2$ is the concatenation of $w_1$ and $w_2$, and $G_1 \cup G_2$ is the disjoint union of $G_1$ and $G_2$.
\end{claim}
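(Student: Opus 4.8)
The plan is to reduce the claim to the multiplicativity of the chromatic polynomial over disjoint unions, exploiting the fact that every operator appearing in a normal order is simultaneously diagonalized by the monomials $x^n$. First I would record the key structural fact. For every $j$, the operator $x^j D^j$ acts on $x^n$ as the scalar $n^{\underline{j}} = n(n-1)\cdots(n-j+1)$; writing $N=xD$ for the ``number operator'', this says $x^j D^j = N^{\underline{j}} = N(N-1)\cdots(N-j+1)$, since both sides agree on every $x^n$ and operators agreeing on all $x^n$ must coincide (the uniqueness noted after (\ref{eq-weyl_general})). Consequently, for any graph $G$ the operator $\sum_k {G \brace k} x^k D^k$ is the polynomial $P_G(N)$, where
$$
P_G(n) = \sum_k {G \brace k}\, n^{\underline{k}}
$$
is precisely the chromatic polynomial of $G$: a proper coloring of $G$ from the palette $[n]$ is exactly a partition of $V(G)$ into some number $k$ of nonempty independent classes together with an injection of those classes into $[n]$.

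Next I would assemble the operator identity. In the application $w_1$ and $w_2$ are Dyck words, hence balanced (equal numbers of $x$'s and $D$'s), so by hypothesis and (\ref{eq-weyl_general}) we have $w_i = \sum_k {G_i \brace k} x^k D^k = P_{G_i}(N)$ with no stray power of $x$. Since $P_{G_1}(N)$ and $P_{G_2}(N)$ are polynomials in the single operator $N$, they commute and multiply as ordinary polynomials, giving $w_1 w_2 = P_{G_1}(N)\,P_{G_2}(N)$. Now I invoke multiplicativity of the chromatic polynomial: a proper coloring of a disjoint union is an independent pair of proper colorings of the parts, so $P_{G_1}(n)P_{G_2}(n)=P_{G_1\cup G_2}(n)$ for all $n$, whence these polynomials are equal. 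Re-expanding in the basis $\{N^{\underline{k}}\}=\{x^k D^k\}$ yields
$$
w_1 w_2 = P_{G_1\cup G_2}(N) = \sum_k {G_1\cup G_2 \brace k}\, x^k D^k .
$$
Comparing with $w_1 w_2 = \sum_k S_{w_1 w_2}(k)\, x^k D^k$ and again using uniqueness of the normal order (linear independence of the $x^k D^k$) gives $S_{w_1 w_2}(k) = {G_1\cup G_2 \brace k}$, as required.

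I expect the only genuine subtlety to be the bookkeeping of the prefactors $x^{m_i-n_i}$ in (\ref{eq-weyl_general}). These vanish exactly because the $w_i$ are balanced, which is what holds in every application of this claim, where the $w_i$ are Dyck words; this is what lets me identify $w_i$ with $P_{G_i}(N)$ and read off $S_{w_1 w_2}(k)$ directly. The real mathematical content — converting a non-commutative operator product into a product of chromatic polynomials — is then immediate once one observes that every operator in sight is a polynomial in the single operator $N$, so I regard \emph{spotting this framing} as the main conceptual hurdle rather than any computation.

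If one does not make this observation, the proof still goes through but splits into two combinatorial computations: expand $x^r D^r x^s D^s = N^{\underline{r}} N^{\underline{s}}$ in the falling-factorial basis to obtain coefficients $c_{r,s,k} = \binom{r}{r+s-k}\binom{s}{r+s-k}(r+s-k)!$, and separately verify the ``merging'' recurrence ${G_1\cup G_2 \brace k} = \sum_{r,s} {G_1 \brace r}{G_2 \brace s}\, c_{r,s,k}$ by classifying each partition of $G_1\cup G_2$ by the partitions it induces on the two parts together with the partial matching that fuses classes across them. This is the harder route, and the chromatic-polynomial argument above is attractive precisely because it packages both of these computations into the single fact that chromatic polynomials multiply over disjoint unions.
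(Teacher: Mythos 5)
Your proof is correct, and it takes a genuinely different route from the paper's. The paper argues directly: it multiplies the two normal-ordered expansions, normal-orders $x^{k_1}D^{k_1}x^{k_2}D^{k_2}$ via Leibniz' rule, and then interprets the resulting sum $\sum_{k_1,k_2}{G_1\brace k_1}{G_2\brace k_2}\binom{k_1}{k_1+k_2-k}\binom{k_2}{k_1+k_2-k}(k_1+k_2-k)!$ combinatorially as a count of partitions of $G_1\cup G_2$ (partition each part separately, then merge selected classes in pairs) --- precisely the ``harder route'' you sketch in your closing paragraph. Your argument replaces both computations with three facts: $x^kD^k=N^{\underline{k}}$ for $N=xD$, the identity $\chi_G(q)=\sum_k{G\brace k}q^{\underline{k}}$, and multiplicativity of chromatic polynomials over disjoint unions. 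This is slicker, and it harmonizes with the paper's own later use of chromatic polynomials: the identity $\chi_G(q)=\sum_k{G\brace k}q^{\underline{k}}$ and the implication (\ref{lem-chromaticpoly}) appear in Section \ref{sec-comb_int_new}, and Claim \ref{claim-chrom} there is proved by exactly the multiplicativity you invoke. What the paper's longer route buys is that its key identity (\ref{induction}) is the one that gets $q$-deformed later: Claim \ref{claim-unionq} is proved ``exactly as in the derivation of (\ref{induction})'' from a $q$-Leibniz rule, and its coefficients are then matched to matching statistics in Lemma \ref{lem-q-matching}; your single-operator argument would need genuine reworking in that setting, since the relevant $q$-weights live on the merging matchings themselves. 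Finally, you are right to flag balancedness, and this is not a defect of your proof relative to the paper's: as literally stated for arbitrary words the claim is false (take $w_1=D$, $G_1=K_1$, $w_2=x$, $G_2$ the graph with no vertices; the hypotheses hold, but $w_1w_2=Dx=xD+1$ gives $S_{w_1w_2}(0)=1\neq 0={K_1\brace 0}$), and the paper's own proof silently assumes balance when it writes $w_if(x)=\sum_k{G_i\brace k}x^kD^kf(x)$ with no prefactor $x^{m_i-n_i}$. Since the claim is only ever applied to Dyck words, both proofs cover every application.
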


\begin{proof}
Using (\ref{base2}) we have, for arbitrary $f$,
\begin{eqnarray}
w_1w_2 f(x) & = & w_1~\sum_{k_2 \geq 0} {G_2 \brace k_2} x^{k_2} D^{k_2}f(x) \nonumber \\
& = & \sum_{k_1 \geq 0} {G_1 \brace k_1} x^{k_1} D^{k_1} \sum_{k_2 \geq 0} {G_2 \brace k_2} x^{k_2} D^{k_2}f(x) \nonumber \\
& = & \sum_{k_1, k_2 \geq 0} {G_1 \brace k_1}{G_2 \brace k_2} x^{k_1} D^{k_1} x^{k_2} D^{k_2}f(x) \label{first}.
\end{eqnarray}
Now by Leibniz' rule
($(fg)^{(n)}(x)=\sum_{k \geq 0} \binom{n}{k} f^{(k)}(x)g^{(n-k)}(x)$) for the iterated derivative of a product we have
\begin{eqnarray}
x^{k_1} D^{k_1} x^{k_2} D^{k_2}f(x) & = & \sum_{j \geq 0} \frac{k_1^{\underline j}k_2^{\underline j}}{j!} x^{k_1+k_2-j}D^{k_1+k_2-j}f(x). \label{Leibnitz_q=1}
\end{eqnarray}
(Recall that $x^{\underline j}$ is the $j^{th}$ falling power of $x$, that is, $x(x-1)\ldots (x-j+1)$.)
Inserting this into (\ref{first}) and extracting the coefficient of $x^kD^k$ from each side we get
\begin{eqnarray}
S_{w_1w_2}(k) & = & \sum_{k_1, k_2 \geq 0} {G_1 \brace k_1}{G_2 \brace k_2} \frac{k_1^{\underline{k_1+k_2-k}}k_2^{\underline{k_1+k_2-k}}}{(k_1+k_2-k)!} \nonumber \\
& = & \sum_{k_1, k_2 \geq 0} {G_1 \brace k_1}{G_2 \brace k_2} \binom{k_1}{k_1+k_2-k} \binom{k_2}{k_1+k_2-k} (k_1+k_2-k)!. \label{induction}
\end{eqnarray}
We claim that the right-hand side of (\ref{induction}) is exactly ${G_1 \cup G_2 \brace k}$. Indeed, one way to generate all partitions of $G_1 \cup G_2$ into $k$ nonempty independent sets is as follows. First, fix a pair $(k_1, k_2)$. Then, for each $i$, partition $G_i$ into $k_i$ non empty independent sets (there are ${G_1 \brace k_1}{G_2 \brace k_2}$ ways to do this). Next, choose $k_1+k_2-k$ of the classes from $G_1$ and $k_1+k_2-k$ of the classes from $G_2$ (there are $\binom{k_1}{k_1+k_2-k} \binom{k_2}{k_1+k_2-k}$ ways to do this). Finally, merge the chosen classes in pairs, one from $G_1$ and one from $G_2$ (there are $(k_1+k_2-k)!$ ways to do this), thereby creating $k_1 + k_2 - (k_1+k_2-k) = k$ non-empty independent sets.
\end{proof}

We are now ready to prove Theorem \ref{thm-main_comb_int}.

\begin{proof} (Theorem \ref{thm-main_comb_int}) We proceed by induction on the length of $w$. If $w$ has length $2$ then $w=xD$ and $G_w=K_1$ and the result is trivial.

If $w$ is irreducible, and of length greater than $2$, then $w=xw'D$ for some Dyck word $w$ which (by induction) has an associated graph $G_{w'}$, constructed as described in Section \ref{subsec-combint1}, with $S_{w'}(k)={G_{w'} \brace k}$ for all $k \geq 0$. That $S_w(k) = {G_w \brace k}$ for all $k \geq 0$, where $G_w$ is obtained from $G_{w'}$ by adding a dominating vertex follows from Claim \ref{claim-dom}.

If $w$ is reducible, and of length greater than $2$, then $w=w_1w_2 \ldots w_k$ for some (irreducible) Dyck words $w_i$ which (by induction) have associated graphs $G_{w_i}$, constructed as described in Section \ref{subsec-combint1}, with $S_{w_i}(k)={G_{w_i} \brace k}$ for all $k \geq 0$. That $S_w(k) = {G_w \brace k}$ for all $k \geq 0$ where $G_w$ is the disjoint union of the $G_{w_i}$'s follows from repeated applications of Claim \ref{claim-union}.
\end{proof}

For Theorem \ref{thm-closedform} we utilize the chromatic polynomial. Recall that associated to a graph $G$ there is a polynomial $\chi_G(q)$, the {\em chromatic polynomial}, whose value at each positive integer $q$ is the number of proper $q$-colorings of $G$, that is, the number of functions $f:V\rightarrow \{1,\ldots, q\}$ satisfying $f(u)\neq f(v)$ whenever $uv \in E$. A key observation is that for all $G$, $\chi_G(q)$ determines $({G \brace k})_{k \geq 0}$, and vice-versa. Indeed, on the one hand inclusion-exclusion gives
\begin{equation} \label{inc-exc}
{G \brace k} = \frac{1}{k!}\sum_{i=0}^k (-1)^i{k \choose i}\chi_G(k-i),
\end{equation}
while on the other hand
$$
\chi_G(q) = \sum_{k \geq 0} {G \brace k}q^{\underline k}.
$$
To see this second relation, note that given a palette of $q$ colors, for each $k$ there are ${G \brace k}$ ways to partition the vertex set into $k$ non-empty color classes, and $q^{\underline k}$ ways to assign colors the classes. A particular consequence of (\ref{inc-exc}) that we will use later is that
\begin{equation} \label{lem-chromaticpoly}
\mbox{if $\chi_G(q)=\chi_{G'}(q)$ for all $q$ then ${G \brace k}={G' \brace k}$ for all $k \geq 0$}.
\end{equation}

Theorem \ref{thm-closedform} follows immediately from (\ref{inc-exc}) and the following claim that expresses the chromatic polynomial of $G_w$ in terms of the heights of the $x$'s in $w$ (recall Definition \ref{defn-height} for the definition of height). 
\begin{claim} \label{claim-chrom}
For any Dyck word $w$, with $m$ $x$'s having heights $a_1, \ldots, a_m$,
$$
\chi_{G_w}(q) = \prod_{i=1}^m (q-a_i).
$$
\end{claim}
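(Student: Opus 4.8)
The plan is to prove the claim by induction on the length of the Dyck word $w$, tracking in parallel how the multiset of heights of the $x$'s transforms under the three construction rules for $G_w$ from Section \ref{subsec-combint1}. The two facts about chromatic polynomials that I will lean on are both elementary: disjoint unions multiply, so $\chi_{G \cup H}(q) = \chi_G(q)\chi_H(q)$; and adding a dominating vertex $v$ to a graph $G$ forces $v$ to receive a color distinct from every other vertex, whence $\chi_{G+K_1}(q) = q\,\chi_G(q-1)$ (one checks this last identity against $G=K_1$, where both sides equal $q(q-1)$). The base case of the induction is $w=xD$: here $G_w=K_1$ has $\chi_{G_w}(q)=q$, while the single $x$ has no letters before it and so has height $a_1=0$, giving $\prod_i(q-a_i)=q-0=q$, as required.

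For the inductive step I would treat the irreducible and reducible cases separately, in each case matching the graph operation to its effect on the heights. Suppose first that $w=xw'D$ is irreducible, with $w'$ a Dyck word whose $x$'s have heights $a'_1,\dots,a'_{m'}$. Prepending the leading $x$ raises by exactly $1$ the excess of $x$'s over $D$'s recorded before every $x$ of $w'$, so the heights of $w$ are $0$ together with $a'_1+1,\dots,a'_{m'}+1$. Since $G_w=G_{w'}+K_1$, the dominating-vertex formula and the induction hypothesis give
\[
\chi_{G_w}(q) = q\,\chi_{G_{w'}}(q-1) = q\prod_{j=1}^{m'}\bigl((q-1)-a'_j\bigr) = (q-0)\prod_{j=1}^{m'}\bigl(q-(a'_j+1)\bigr),
\]
which is precisely $\prod_{i=1}^{m}(q-a_i)$.

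Suppose instead that $w=w_1\cdots w_\ell$ is reducible with each $w_i$ irreducible. Each prefix $w_1\cdots w_{i-1}$ is balanced, so an $x$ lying in the block $w_i$ sees the same excess of $x$'s over $D$'s in $w$ as it does inside $w_i$ alone; hence the multiset of heights of $w$ is the disjoint union of the multisets of heights of the $w_i$. Because $G_w=G_{w_1}\cup\cdots\cup G_{w_\ell}$, multiplicativity and the induction hypothesis yield $\chi_{G_w}(q)=\prod_{i=1}^{\ell}\chi_{G_{w_i}}(q)=\prod_{i=1}^{m}(q-a_i)$, completing the induction. The crux — and the only place where genuine care is needed — is the height bookkeeping in the irreducible case: one must verify that the shift $q\mapsto q-1$ produced by the dominating vertex lines up exactly with the upward shift $a'_j\mapsto a'_j+1$ in the heights. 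Everything else is routine, and the reducible case can equally be run off the remark following the construction that $G_{w'w''}=G_{w'}\cup G_{w''}$ whenever $w'$ and $w''$ are both Dyck words.
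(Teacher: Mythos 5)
Your proof is correct and follows essentially the same route as the paper's: induction on word length, using multiplicativity of the chromatic polynomial over disjoint unions for the reducible case and the identity $\chi_{G_{w'}+K_1}(q)=q\,\chi_{G_{w'}}(q-1)$ for the irreducible case, matched against the corresponding height bookkeeping. Your write-up just makes the height shifts (the new $x$ at height $0$, and $a'_j\mapsto a'_j+1$) more explicit than the paper does.
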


\begin{proof}
We proceed by induction on the length of $w$, with length 2 trivial. Consider now a word $w$ of length at least $4$. If $w$ is reducible, say $w=w_1\ldots w_k$ with each $w_i$ an (irreducible) Dyck word, then by Claim \ref{claim-union} we have $G_w = G_{w_1} \cup \ldots \cup G_{w_k}$ and so $\chi_{G_w}(q) = \prod_{i=1}^k \chi_{G_{w_i}}(q)$. The height of an $x$ in $w_i$ is the same as the height of the corresponding $x$ in $w$, and so the claim follows by induction. If instead $w=xw'D$ is irreducible then by Claim \ref{claim-dom}, $G_w$ is obtained from $G_{w'}$ by adding a dominating vertex, so $\chi_{G_w}(q) = q\chi_{G_{w'}}(q-1)$. The height of an $x$ in $w'$ is now one less than the height of the corresponding $x$ in $w$, and so again the claim follows by induction.
\end{proof}

\section{Interpreting $S_w(k)$ in terms of partitions of $H_w$} \label{sec-H}

To prove Theorem \ref{thm-main_comb_int2}, we need only combine two old results. The first is due to Goldman et al., and forms part of their series of results on rook polynomials. An {\em $n$-board} is a subset of $\{1, \ldots, n\} \times \{1, \ldots, n\}$, and it is said to be {\em proper} if 1) it includes only pairs $(i,j)$ with $i > j$, and 2) it satisfies the transitivity property that if $(i,j)$ and $(j,k)$ are both elements of the board, then so too is $(i,k)$. To a proper $n$-board $B$ associate a graph $\Gamma_n(B)$ on vertex set $\{1, \ldots, n\}$ by putting an edge from $i$ to $j$ (for $i > j$) if and only if $(i,j) \not \in B$. Denote by $r_k(B)$ the number of ways of placing $k$ non-attacking rooks on $B$; that is, the number of ways of selecting a subset of $B$ of size $k$, with no two elements of the subset sharing a first coordinate, and no two sharing a second coordinate. The relevant result of Goldman et al. \cite[Theorem 2]{GoldmanJoichiWhite} is that for all $k \geq 0$, $r_k(B) = {\Gamma_n(B) \brace n-k}$. (Goldman et al. use the notation $q_{n-k}$ for ${\Gamma_n(B) \brace n-k}$.)

To interpret this result in the language of the present paper, let $w$ be a Dyck word with $n$ $x$'s, and let ${\mathcal F}_w$ be the set of (labels of) unit squares that lie above the staircase path of $w$, and inside the $[0,n] \times [0,n]$ square (note that ${\mathcal F}_w$ forms what is often called a {\em Ferrers board}). While ${\mathcal F}_w$ does not form a proper $n$-board, it is easy to check that if we let $\tilde{{\mathcal F}}_w$ be the reflection of ${\mathcal F}_w$ across the line $x=y$, then $\tilde{{\mathcal F}}_w$ does, and that the graph $\Gamma_n(\tilde{{\mathcal F}}_w)$ is isomorphic to $H_w$ (via the identity map on the labels). It is also clear that $r_k(\tilde{{\mathcal F}}_w)=r_k({\mathcal F}_w)$. Thus Goldman et al.'s result is that for all $k \geq 0$,
\begin{equation} \label{H1}
r_k({\mathcal F}_w) = {H_w \brace n-k}.
\end{equation}

The second result we need is Navon's combinatorial interpretation of $S_w(k)$ from \cite{Navon}. Just as we associated a Dyck path with a Dyck word in Section \ref{subsec-combint1}, we may associate a staircase path with an arbitrary word $w$ by starting at $(0,0)$ and, reading $w$ from left to right, taking a step in the positive $y$ direction each time an $x$ is encountered in $w$, and a step in the positive $x$ direction each time a $D$ is encountered. Let $B_w$ be the set of labels of the unit squares that lie above this staircase path and inside the box $[0,n] \times [0,m]$, where $w$ has $m$ $x$'s and $n$ $D$'s. As before, let $r_k(B_w)$ be the number of ways of placing $k$ non-attacking rooks on $B_w$. Navon's combinatorial interpretation of the numbers $S_w(k)$, as stated (and reproved) by Varvak in \cite[Theorem 3.1]{Varvak}, is that $S_w(n-k) = r_k(B_w)$. (Note that Varvak uses ``$U$'' in place of $x$).

It is clear that if $w$ is a Dyck word with $n$ $x$'s then $B_w$ from Navon's interpretation is exactly our ${\mathcal F}_w$, and so Navon's interpretation becomes in this case
\begin{equation} \label{H2}
S_w(n-k) = r_k({\mathcal F}_w)
\end{equation}
for all $k \geq 0$. Combining (\ref{H1}) and (\ref{H2}) we get Theorem \ref{thm-main_comb_int2}.

\medskip

It is also possible to give a direct proof (not using Navon's interpretation) of the identity ${H_w \brace k}={G_w \brace k}$ for all Dyck words $w$ and integers $k \geq 0$ (and so also an alternate proof of Theorem \ref{thm-main_comb_int2} via Theorem \ref{thm-main_comb_int}). We have already calculated (in Section \ref{sec-comb_int_new}) the chromatic polynomial of $G_w$ to be
$$
\chi_{G_w}(q) = \prod_{i=1}^m (q-a_i)
$$
where $w$ has $m$ $x$'s (and so also $m$ $D$'s, since we are assuming it to be a Dyck word), and $a_i$ is the height of the $i^{th}$ $x$. If we can show that $H_w$ has the same chromatic polynomial, then we get ${H_w \brace k}={G_w \brace k}$ using (\ref{lem-chromaticpoly}).

To compute $\chi_{H_w}(q)$, consider the set ${\mathcal T}_w=\{(x_1,y_1), \ldots, (x_k,y_k)\}$ (defined in Section \ref{subsec-Hintro}) of peaks of the Dyck path of $w$. As discussed in Section \ref{subsec-Hintro}, the edge set of $H_w$ is obtained by putting cliques on each of the consecutive integer segments $\{x_i, \ldots, y_i\}$, $1 \leq i \leq k$ (see Figure \ref{fig-graphHw} in Section \ref{subsec-Hintro}).
We properly $q$-color $H_w$ sequentially, starting with the clique on segment $\{x_1, \ldots, y_1\}$, which can be colored in $q(q-1)\ldots (q-(y_1-x_1))$ ways. Notice, by our alternate characterization from Section \ref{sec-comb_int_new} of the heights of the $x_i$'s in a word (if the step in the positive $y$ direction corresponding to an $x$ goes from $(a,b)$ to $(a,b+1)$, then the height of that $x$ is $b-a$), that this is the same as $\prod_{i=1}^{y_1} (q-a_i)$ (where $a_i$ is the height of the $i^{th}$ $x$).

Next we move on to the clique on segment $\{x_2, \ldots, y_2\}$. The first $y_1-x_2+1$ vertices of this clique have already been colored (since they are part of the clique on segment $\{x_1, \ldots, y_1\}$), so it remains to color the last $y_2-y_1$ vertices. The palette of colors available has size $q-(y_1-x_2+1)$, so the number of ways in which these last $y_2-y_1$ vertices of the second clique can be colored is $(q-(y_1-x_2+1))(q-(y_1-x_2+1)-1)\dots (q-(y_1-x_2+1)-(y_2-y_1-1))$; this is the same as $\prod_{i=y_1+1}^{y_2} (q-a_i)$. Continuing along the integer segment cliques in this manner, and noting that all proper $q$-colorings of $H_w$ can be achieved by this sequential scheme, we get that indeed the number of proper $q$-colorings of $H_w$ is $\prod_{i=1}^m (q-a_i)$.

\section{A new summation formula when $w=(x^sD^s)^n$} \label{sec-summ}

We now turn to Theorem \ref{thm-summ}, which deals with the case $w=(x^sD^s)^n$. From Theorem \ref{thm-main_comb_int} we know that
$$
S_w(k) = {nK_s \brace k}
$$
in this case, where $nK_s$ is the disjoint union of $n$ copies of $K_s$.

The key observation that allows us to say something sensible about ${nK_s \brace k}$ is (\ref{lem-chromaticpoly}), which we use in the following way. If $G$ consists of $a$ disjoint copies of $K_b$ together with $r$ isolated vertices, then its chromatic polynomial is
$$
\chi_G(q)=q^{r+a}(q-1)^a(q-2)^a \ldots (q-(b-1))^a.
$$
On the other hand, if $G$ consists of a fan of $a$ copies of $K_b$ ($a$ copies of $K_b$ with a single vertex that is in common to all the copies) together with $r+a-1$ isolated vertices, then the chromatic polynomial is also
$$
\chi_G(q) = q^{r+a}(q-1)^a(q-2)^a \ldots (q-(b-1))^a.
$$
So whenever we see a graph of the first kind described above, we may replace it with a graph of the second kind, without changing the values of the graph Stirling numbers.

As a warm-up we use this observation first to obtain an expression for ${nK_2 \brace k}$. The chromatic polynomial of $nK_2$ is $q^n(q-1)^{n}$, 
which is the same as the chromatic polynomial of the graph $G$ consisting of a star on $n+1$ vertices together with $n-1$ isolated vertices. So ${nK_2 \brace k}={G \brace k}$, and we can find ${G \brace k}$ easily: first, decide on a subset of size $\ell$ of the isolated vertices (perhaps empty) to be in the same block as the center of the star (which cannot be in the same block as any of the leaves of the star). The remaining vertices now form an independent set of size $2n-1-\ell$, so we use an ordinary Stirling number of the second kind to count the number of partitions of these vertices into $k-1$ classes. This leads to
$$
{nK_2 \brace k} = \sum_{\ell=0}^{n-1} \binom{n-1}{\ell}{2n-1-\ell \brace k-1}.
$$

Now we deal with the general case (with $K_s$ replacing $K_2$). Here it will be convenient to define $G_{a,b,c}$ as the graph consisting of a fan of $a$ copies of $K_b$ together with $c$ isolated vertices (for example, $G_{n,2,n-1}$ is the graph $G$ that replaced $nK_2$ in the last paragraph).

The chromatic polynomial of $nK_s$ is $\left(q(q-1)(q-2)\ldots (q-(s-1))\right)^n$, and this is the same as the chromatic polynomial of $G_{n,s,n-1}$. To partition this graph into $k$ non-empty independent sets, we first decide on a subset of size $i_1$, $0 \leq i_1 \leq n-1$, of the isolated vertices to be in the same block as the center of the fan (which cannot be in a block with any of the other vertices of the fan). The remaining vertices now form the following structure: $n$ copies of $K_{s-1}$, together with $n-1-i_1$ isolated vertices. We may replace this with $G_{n,s-1,2(n-1)-i_1}$ without changing independent set partition counts, and so
$$
{nK_s \brace k} = \sum_{i_1=0}^{n-1} \binom{n-1}{i_1}{G_{n,s-1,2(n-1)-i_1} \brace k-1}.
$$
What is ${G_{n,s-1,2(n-1)-i_1} \brace k-1}$? By first partnering the center vertex of the fan with some subset of size $i_2$ ($0 \leq i_2 \leq 2(n-1)-i_1$) of the isolated vertices, then replacing the remaining structure ($n$ disjoint copies of $K_{s-2}$, together with $2(n-1)-i_1-i_2$ isolated vertices) with $G_{n,s-2,3(n-1)-i_1-i_2}$, we get
$$
{G_{n,s-1,2(n-1)-i_1} \brace k-1} = \sum_{i_2 = 0}^{2(n-1)-i_1} \binom{2(n-1)-i_1}{i_2}{G_{n,s-2,3(n-1)-i_1-i_2} \brace k-2}
$$
and so
$$
{nK_s \brace k} = \sum_{i_1=0}^{n-1} \binom{n-1}{i_1} \sum_{i_2 = 0}^{2(n-1)-i_1} \binom{2(n-1)-i_1}{i_2}{G_{n,s-2,3(n-1)-i_1-i_2} \brace k-2}.
$$
Continuing this process we eventually reach
\begin{eqnarray*}
& {G_{n,2,(s-1)(n-1)-i_1-i_2-\ldots -i_{t-2}} \brace k-(s-2)} & \\
 & = & \\
& \sum_{i_{s-1}=0}^{(s-1)(n-1)-i_1-i_2-\ldots -i_{s-2}} \binom{(s-1)(n-1)-i_1-i_2-\ldots -i_{s-2}}{i_{s-1}} {s(n-1)+1-i_1-i_2-\ldots -i_{s-1} \brace k-(s-1)}. &
\end{eqnarray*}
(the final graph we consider on the right-hand side above is $G_{n,1,s(n-1)-i_1-i_2-\ldots -i_{s-1}}$, which is a collection of $s(n-1)+1-i_1-i_2-\ldots -i_{s-1}$ isolated vertices, so we are able to use an ordinary Stirling number to count partitions of it into $k-(s-1)$ blocks). We conclude that ${nK_s \brace k}$ equals
\begin{eqnarray}
& \sum_{i_1=0}^{n-1} \binom{n-1}{i_1}  \sum_{i_2 = 0}^{2(n-1)-i_1} \binom{2(n-1)-i_1}{i_2} \ldots & \nonumber \\
& \sum_{i_{s-2}=0}^{(s-2)(n-1)-i_1-i_2-\ldots -i_{s-3}} \binom{(s-2)(n-1)-i_1-i_2-\ldots -i_{s-3}}{i_{s-2}} & \label{first-exp} \\
& \sum_{i_{s-1}=0}^{(s-1)(n-1)-i_1-i_2-\ldots -i_{s-2}} \binom{(s-1)(n-1)-i_1-i_2-\ldots -i_{s-2}}{i_{s-1}} {s(n-1)+1-i_1-i_2-\ldots -i_{s-1} \brace k-(s-1)}. & \nonumber
\end{eqnarray}
Set $\ell=i_1+i_2+\ldots + i_{s-1}$ (so $\ell$ ranges from $0$ to $(s-1)(n-1)$). For each $\ell$ in this range the coefficient of ${s(n-1)+1-\ell \brace k-(s-1)}$ in (\ref{first-exp}) is
$$
\sum_{i_1+i_2+\ldots +i_{s-1} = \ell} \binom{n-1}{i_1} \binom{2(n-1)-i_1}{i_2} \ldots \binom{(s-1)(n-1)-i_1-i_2-\ldots -i_{s-2}}{i_{s-1}}
$$
(note that if ever we consider a $(s-1)$-tuple in the sum above that fails to satisfy one of the conditions $0 \leq i_j \leq j(n-1)-i_1-i_2-\ldots -i_{j-1}$, then the corresponding binomial coefficient will be $0$). This establishes (\ref{fnsl}), and completes the proof of Theorem \ref{thm-summ}.

To establish (\ref{fnsl2}), we must show that the right-hand side of (\ref{fnsl}) equals the right-hand side of (\ref{fnsl2}), for which we employ a combinatorial argument. Let $A_1, \ldots, A_{s-1}$ be $s-1$ disjoint sets, each of size $n-1$. An {\em $\ell$-selection} from $A_1$ through $A_{s-1}$ is a specification of sets $A_{11}$, $A_{21}$, $A_{22}$, $A_{31}$, $A_{32}$, $A_{33}$, $\ldots$ $A_{(s-1)1}$, $\ldots$, $A_{(s-1)(s-1)}$, pairwise disjoint, with $A_{11} \subseteq A_1$, $A_{21} \cup A_{22} \subseteq A_2$, $\ldots$, $A_{(s-1)1} \cup \ldots \cup A_{(s-1)(s-1)} \subseteq A_{s-1}$, and $|A_{11}|+|A_{21}| + \ldots + |A_{(s-1)(s-1)}| = \ell$.

To count the number of $\ell$-selections, we first specify a composition $\ell=i_1+i_2+\ldots+i_{s-1}$, then from each $A_k$ select a subset of size $i_k$, and then for each element of the chosen subset, decide which of $A_{k1}, \ldots A_{kk}$ the element belongs to. This gives that the number of $\ell$-selections is
$$
\sum_{i_1+\ldots+i_{s-1} = \ell} \prod_{k=1}^{s-1} k^{i_k}\binom{n-1}{i_k} =  [x^\ell]\left((1+x)(1+2x)\ldots (1+(s-1)x)\right)^{n-1}.
$$

Another way to count $\ell$-selections is to first specify a composition $\ell=i_1+i_2+\ldots+i_{s-1}$. Then, select from $A_{s-1}$ a subset of size $i_1$ to be $A_{(s-1)(s-1)}$. Next, select a subset of $(A_{s-2} \cup A_{s-1}) \setminus A_{(s-1)(s-1)}$ of size $i_2$; let its intersection with $A_{s-1}$ be $A_{(s-1)(s-2)}$, and its intersection with $A_{s-2}$ be $A_{(s-2)(s-2)}$. Next, select a subset of $(A_{s-3} \cup A_{s-2} \cup A_{s-1}) \setminus (A_{(s-1)(s-1)} \cup A_{(s-1)(s-2)} \cup A_{(s-2)(s-2)})$ of size $i_3$; let its intersection with $A_{s-1}$ be $A_{(s-1)(s-3)}$, its intersection with $A_{s-2}$ be $A_{(s-2)(s-3)}$ and its intersection with $A_{s-3}$ be $A_{(s-3)(s-3)}$. Continue in this manner, until finally we are selecting a subset of size $i_1$ of the as-yet-unselected elements of $A_1 \cup \ldots \cup A_{s-1}$; for each $k$, $1 \leq k \leq s-1$, let its intersection with $A_k$ be $A_{k1}$. This gives that the number of $\ell$-selections is
$$
\sum_{i_1+i_2+\ldots +i_{s-1} = \ell} \binom{n-1}{i_1} \binom{2(n-1)-i_1}{i_2} \ldots \binom{(s-1)(n-1)-i_1-i_2-\ldots -i_{s-2}}{i_{s-1}}.
$$
This completes the verification of (\ref{fnsl2}).

\section{Bijections involving increasing forests} \label{sec-forests}

Recall that for the word $w = (x^{r} D)^n$, we set $G_w = G(n,r)$ and $H_w = H(n,r)$; in this case $G(n,r)$ is the threshold graph obtained from the empty graph by iterating $n$ times the operation of adding an isolated vertex followed by $r-1$ dominating vertices, and $H(n,r)$ is the indifference graph on vertex set $\{1,\ldots,rn\}$ with edges covered by cliques on vertices $i$ through $ir$, for $1 \leq i \leq n$. In this section we will exhibit bijections between the set of partitions $G(n,r)$ (and $H(n,r)$) into $k+(r-1)n$ independent sets and the set of \emph{decreasing} $r$-ary forests with $n$ vertices and $k$ components (where decreasing means that the labels go in decreasing order when read along any path starting from the root vertex of a tree). Decreasing forests turn out to be notationally a little easier to deal with than increasing forests, but of course via the order-reversing permutation $x \mapsto n+1-x$ there is a perfect correspondence between the two.

\subsection{Bijection for $G(n,r)$} \label{sec-bij1}

Here we provide a bijective proof that
\[
{G(n,r) \brace k + (r-1)n}  =  |F'(r,n,k)|,
\]
where $F'(r,n,k)$ is the set of decreasing $r$-ary forests with $n$ vertices and $k$ components.
It will be useful to exhibit the threshold graph $G(n,r)$ concisely.  To that end, we will use `$+$' to indicate the addition of a dominating vertex and `$\bullet$' to indicate the addition of an isolated vertex, presenting the vertices from right to left (see Figure \ref{fig-G(n,r)}).  In particular, $+$'s are adjacent to every vertex to their right and also every $+$ to their left, while $\bullet$'s are only adjacent to the $+$'s to their left.
In the concise representation of $G(n,r)$, label the vertices in increasing order from left to right; we will often abuse notation and identify a vertex with its label.

\begin{figure}[ht]
\[
\begin{array}{cccccccccccccccccccc}
1&2&3&4&5&6&7&8&9&10&11&12&13&14&15&16&17&18&19&20\\
+&+&+&\bullet&+&+&+&\bullet&+&+&+&\bullet&+&+&+&\bullet&+&+&+&\bullet\\
\end{array}
\]
\caption{The concise notation for the graph $G(5,4)$, with labels.}
\label{fig-G(n,r)}
\end{figure}

We also note in passing that $G(n,r)$ may be constructed from the half-graph (on $2n$ vertices $x_1, \ldots, x_n,y_1, \ldots, y_n$ with $x_i$ joined to $y_j$ when $j \leq i$), by adding edges of the form $y_iy_j$ for $i \neq j$ and blowing up each $y_i$ to a copy of $K_{r-1}$ (see Figure \ref{fig-Gw}).

\begin{figure}[ht]
\begin{center}
\begin{tikzpicture}[scale=2.5]

    \node (v1) at (1,2) [circle,draw,fill] {};
	\node (v2) at (2,2) [circle,draw,fill] {};
	\node (v3) at (3,2) [circle,draw,fill] {};
	\node (v4) at (4,2) [circle,draw,fill] {};
	\node (v5) at (5,2) [circle,draw,fill] {};
	\node (w1) at (1,1) [circle,draw] {$K_{r-1}$};
	\node (w2) at (2,1) [circle,draw] {$K_{r-1}$};
	\node (w3) at (3,1) [circle,draw] {$K_{r-1}$};
	\node (w4) at (4,1) [circle,draw] {$K_{r-1}$};
	\node (w5) at (5,1) [circle,draw] {$K_{r-1}$};
	
	\draw (.66,.66) -- (.66,1.34);
	\draw (.66,1.34) -- (5.34,1.34);
	\draw (5.34,1.34) -- (5.34,.66);
	\draw (5.34,.66) -- (.66,.66);
	\node at (5.8,1) {$\Bigg\}$ complete};

	\foreach \from/\to in {v5/w1,v5/w2,v5/w3,v5/w4,v5/w5,v4/w1,v4/w2,v4/w3,v4/w4,v3/w1,v3/w2,v3/w3,v2/w1,v2/w2,v1/w1}
	\draw (\from) -- (\to);

\end{tikzpicture}
\caption{The graph $G(5,r)$. An edge from a vertex on top to a $K_{r-1}$ below indicates that the vertex on top is joined to all vertices of $K_{r-1}$.}

\label{fig-Gw}
\end{center}
\end{figure}

Let $\mathcal{P}$ be the set of partitions of $G(n,r)$ into $k + (r-1)n$ independent sets.  We begin by describing a map from $\mathcal{P}$ to $F'(r,n,k)$.  Fix $P \in \mathcal{P}$, which corresponds to a partition of $\{1,\ldots,rn\}$ via the labels on the vertices. Note that the vertices in any subset of $V(G(n,r))$, and so in particular the independent sets that make up $P$, are totally ordered by the labeling of the vertices.
We will iteratively build a decreasing $r$-ary forest from $P$.

Let $M$ denote the set of vertices of the form $ar$ for $1\leq a \leq n$ (these are the $\bullet$'s from the concise representation of $G(n,r)$). The vertices not in $M$, together with vertex $rn$, form a clique in $G(n,r)$ and so must be in distinct independent sets in $P$. This means that there are $k$ independent sets in $P$ that contain only elements from $M$. For each of these $k$ independent sets we place a root in the forest with label $j$, where $jr$ is the largest label in the corresponding independent set.

We use the location of the remaining vertices in $M$ to iteratively construct the rest of the forest, and we do so by considering these $n-k$  vertices in decreasing order.  Suppose that $xr$ is the largest vertex in $M$ that has not been considered, meaning in particular that $xr$ is not the largest vertex in an independent set consisting of elements of $M$.
Among the elements of $xr$'s independent set with label larger than $xr$, let $z$ be the element with the smallest label.
(Notice that such a $z$ must exist: $xr$ must be in an independent set with either a vertex from $M$ with a larger label, or some vertex outside of $M$, which would necessarily have a larger label than $xr$ since $xr$ is adjacent to all vertices outside of $M$ with smaller labels.)

There are unique positive integers $1 \leq a \leq n$ and $1 \leq p \leq r$ such that $z = (a-1)r + p$. We place a vertex labeled $x$ in the forest in the slot reserved for the $p^{th}$ child of the vertex labeled $a$. (Notice that $a$ has indeed been placed in the forest already, since $xr < z=(a-1)r+p \leq ar$ and we consider the elements of $M$ in decreasing order.)

Since distinct $z$'s are associated to distinct $xr$'s, and the pair $(a,p)$ is uniquely determined by $z$, we will never be forced to place distinct vertices in the same location as this iterative process goes on, and so the final result will indeed be an $n$-vertex, $k$-component, $r$-ary forest; and
the fact that $x < a$ and $x$ is placed below $a$ in the forest means that the labeling is decreasing.

The whole process is reversible. We can see this by considering the vertices of the forest in order of increasing labels (from $1$ to $n$). For each label $x$ we can use $p$ and $a$ (which are known from the location of $x$ in the forest) to obtain $z$, which is the next largest vertex in the independent set containing $xr$.  In this way, we recover the partition of $G(n,r)$ into $k + (r-1)n$ independent sets that led to the forest. This shows that our map is indeed a bijection.

\subsection{Bijection for $H(n,r)$} \label{sec-bij2}

We now provide a bijective proof that
\[
{H(n,r) \brace k + (r-1)n}  =  |F'(r,n,k)|,
\]
where $H(n,r)$ is the graph on vertex set $\{1,\ldots,rn\}$ with edges covered by cliques on vertices $i$ through $ir$, for $i=1,\ldots,n$ (see Figure \ref{fig-Hw}).

\begin{figure}[ht!]
\begin{center}
\begin{tikzpicture}[scale=1]

	\node (v1) at (1,1) [circle,draw] {};
	\node at (1,-.25) {$1$};
	\node (v2) at (2,1) [circle,draw] {};
	\node at (2,-.25) {$2$};
	\node (v3) at (3,1) [circle,draw] {};
	\node at (3,-.25) {$3$};
	\node (v4) at (4,1) [circle,draw] {};
	\node at (4,-.25) {$4$};
	\node (v5) at (5,1) [circle,draw] {};
	\node at (5,-.25) {$5$};
	\node (v6) at (6,1) [circle,draw] {};
	\node (v7) at (7,1) [circle,draw] {};
	\node (v8) at (8,1) [circle,draw] {};
	\node (v9) at (9,1) [circle,draw] {};
	\node (v10) at (10,1) [circle,draw] {};
	\node (v11) at (11,1) [circle,draw] {};
	\node (v12) at (12,1) [circle,draw] {};
	\node (v13) at (13,1) [circle,draw] {};
	\node (v14) at (14,1) [circle,draw] {};
	\node (v15) at (15,1) [circle,draw] {};
	
	\draw (.5,.5) -- (.5,1.5);
	\draw (.5,1.5) -- (3.45,1.5);
	\draw (3.45,1.5) -- (3.45,.5);
	\draw (3.45,.5) -- (.5,.5);
	
	\draw [red] (1.5,.4) -- (1.5,1.6);
	\draw [red] (1.5,1.6) -- (6.5,1.6);
	\draw [red] (6.5,1.6) -- (6.5,.4);
	\draw [red] (6.5,.4) -- (1.5,.4);

	\draw [blue] (2.5,.3) -- (2.5,1.7);
	\draw [blue] (2.5,1.7) -- (9.5,1.7);
	\draw [blue] (9.5,1.7) -- (9.5,.3);
	\draw [blue] (9.5,.3) -- (2.5,.3);

	\draw [green] (3.55,.2) -- (3.55,1.8);
	\draw [green] (3.55,1.8) -- (12.5,1.8);
	\draw [green] (12.5,1.8) -- (12.5,.2);
	\draw [green] (12.5,.2) -- (3.55,.2);

	\draw [orange] (4.5,.1) -- (4.5,1.9);
	\draw [orange] (4.5,1.9) -- (15.5,1.9);
	\draw [orange] (15.5,1.9) -- (15.5,.1);
	\draw [orange] (15.5,.1) -- (4.5,.1);

\end{tikzpicture}
\caption{The graph $H(5,3)$.}
\label{fig-Hw}
\end{center}
\end{figure}

Let ${\mathcal P}'$ be the set of partitions of $H(n,r)$ into $k+(r-1)n$ independent sets.
As before, we start by describing a map from ${\mathcal P}'$ to $F'(r,n,k)$. Fix $P' \in {\mathcal P}'$. This corresponds to a partition of $\{1,\ldots,rn\}$ via the labels on the vertices (we will again abuse notation and refer to the vertices by their labels).  Place the vertices of each independent set into an increasing ordered list.  We will iteratively build a decreasing $r$-ary forest from $P'$.

If $i \leq n$ is the largest label in an independent set, put a root in the forest with label $i$.  This gives $k$ roots for the $k+(r-1)n$ independent sets. We will consider the remaining vertices in $\{1,\ldots,n\}$ in decreasing order.

Suppose that $x$ is the largest vertex that has not been placed in the forest, and suppose that $x$ immediately precedes $y$ in the list for some independent set (so $x < n$, and $y > xr$).  Then find the unique positive integers $2 \leq a \leq n$ and $1 \leq p \leq r$ satisfying $y = (a-1)r + p$.  Place a vertex labeled $x$ in the $p^{th}$ possible position (again scanning from left to right, say) below the vertex labeled $a$ in the forest.  Using the increasing order on the independent set assures that we do not try to place two labels in the same slot beneath a vertex in the forest, since each $y$ will only be considered once and each $y$ is in bijective correspondence with a pair $(a,p)$.

Now $x$ and $y$ are in the same independent set (and no vertex with a label between that of $x$ and $y$ is in the independent set), so $rx < y = (a-1)r + p \leq ra$. This implies that $x < a$.  Therefore this iterative procedure produces a decreasing $r$-ary forest with $k$-components. As with the map described in Section \ref{sec-bij1}, this process is reversible. We see this by considering the vertices of a forest in order of increasing labels. For each $x$ we can use $p$ and $a$ (which are known from the location of $x$ in the forest) to obtain $y$, which is the next label in the independent set containing $x$.  In this way, we recover the partition of $H(n,r)$ into $k + (r-1)n$ independent sets that led to the forest, showing that our map is a bijection.

\section{$q$-analogs} \label{sec-q}

The proof of Theorem \ref{thm-q}, like that of Theorem \ref{thm-main_comb_int}, is based on two claims.
\begin{claim} \label{claim-domq}
Let $w=xw'D$ be an irreducible Dyck word in the $q$-deformed Weyl algebra, with $w'$ non-empty. For all $k \geq 0$ we have
$$
S^q_w(k) = S^q_{w'}(k-1),
$$
where $S^q_{w'}(-1)$ is interpreted as $0$.
\end{claim}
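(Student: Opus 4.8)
The plan is to mimic the classical computation underlying Claim~\ref{claim-dom}, the key point being that passing from the normal order of $w'$ to that of $w=xw'D$ requires only prepending a single $x$ on the left and appending a single $D$ on the right, and neither of these operations forces us to commute an $x$ past a $D$. Consequently the defining relation $Dx=qxD+1$ of the $q$-deformed Weyl algebra is never invoked, and the argument is formally identical to the case $q=1$.

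Concretely, since $w'$ is a non-empty Dyck word it has equally many $x$'s as $D$'s, so the prefactor $x^{m-n}$ in its normal order (\ref{eq-weyl_general_q}) is trivial and we may write
$$
w' = \sum_{k \geq 0} S^q_{w'}(k)\, x^k D^k.
$$
Multiplying on the left by $x$ and on the right by $D$, and using only that $x$ commutes with powers of $x$ and $D$ with powers of $D$, I obtain
$$
w = x w' D = \sum_{k \geq 0} S^q_{w'}(k)\, x^{k+1} D^{k+1}.
$$

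To finish, I would re-index the sum by setting $j=k+1$ (adopting the convention $S^q_{w'}(-1)=0$), giving $w = \sum_{j \geq 0} S^q_{w'}(j-1)\, x^j D^j$. Since $w$ is also a Dyck word, its normal order (\ref{eq-weyl_general_q}) reads $w = \sum_{j \geq 0} S^q_w(j)\, x^j D^j$, again with trivial prefactor. Comparing the two expansions and invoking the uniqueness of the $q$-deformed normal order yields $S^q_w(j) = S^q_{w'}(j-1)$ for all $j \geq 0$, as required.

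I do not anticipate a genuine obstacle here; the only thing to guard against is the temptation to think the $q$-relation must enter. The entire content of the claim is the observation that it does not---both the adjoined $x$ and $D$ sit on the outside of the already normally ordered word $w'$, so no reordering across the creation/annihilation boundary takes place, and the polynomial weight recorded by $S^q_{w'}(k)$ is carried over unchanged into $S^q_w(k+1)$.
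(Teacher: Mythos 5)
Your proposal is correct and follows essentially the same route as the paper's proof: expand $w'$ in its (prefactor-free) $q$-normal order, multiply by $x$ on the left and $D$ on the right to absorb the powers, and conclude by the uniqueness of the representation in (\ref{eq-weyl_general_q}). The only cosmetic difference is that you re-index after the multiplication while the paper shifts the index beforehand; your added remark that the relation $Dx=qxD+1$ is never invoked is accurate and already implicit in the paper's computation.
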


\begin{proof}
We have $w=\sum_{k \geq 0} S^q_w(k) x^kD^k$, but also
\begin{eqnarray*}
w & = & xw'D \\
& = &  x\left(\sum_{k \geq 0} S^q_{w'}(k-1) x^{k-1}D^{k-1}\right)D \\
& = & \sum_{k \geq 0} S^q_{w'}(k-1) x^kD^k.
\end{eqnarray*}
The claim now follows by the uniqueness of the representation in (\ref{eq-weyl_general_q}).
\end{proof}

For the second claim, we define the $q$-factorial (for integer $a$) via
$$
[a]_q! = \left\{
\begin{array}{ll}
[a]_q [a-1]_q [a-2]_q \ldots [2]_q [1]_q & \mbox{if $a \geq 1$}, \\
1 & \mbox{if $a=0$, and} \\
0 & \mbox{if $a < 0$},
\end{array}
\right.
$$
where recall that for positive integer $m$, $[m]_q = 1+q+\ldots +q^{m-1}$, and define the $q$-binomial coefficient (for integers $a$ and $b$) via
$$
\binom{a}{b}_q = \frac{[a]_q!}{[b]_q![a-b]_q!}.
$$
\begin{claim} \label{claim-unionq}
Let $w_1, w_2$ be Dyck words in the $q$-deformed Weyl algebra.
For all $k \geq 0$ we have
$$
S^q_{w_1w_2}(k) = \sum_{r, s \geq 0} S^q_{w_1}(r)S^q_{w_2}(s) q^{(r-k)(s-k)} \binom{r}{r+s-k}_q \binom{s}{r+s-k}_q [r+s-k]_q!
$$
where $w_1w_2$ is the concatenation of $w_1$ and $w_2$.
\end{claim}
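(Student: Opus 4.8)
The plan is to mirror the proof of Claim~\ref{claim-union}, replacing the classical Leibniz expansion (\ref{Leibnitz_q=1}) by its $q$-deformed counterpart. The starting point is identical: using the normal orders of $w_2$ and then $w_1$ furnished by (\ref{eq-weyl_general_q}), I would write
$$
w_1w_2 = \sum_{r,s \geq 0} S^q_{w_1}(r)S^q_{w_2}(s)\, x^r D^r x^s D^s,
$$
so that the whole problem reduces to normal ordering the middle word $x^r D^r x^s D^s$ and reading off the coefficient of $x^kD^k$.

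The technical heart is therefore a $q$-analog of (\ref{Leibnitz_q=1}). First I would establish, from the defining relation $Dx=qxD+1$, the commutation rule
$$
D^a x^b = \sum_{j \geq 0} q^{(a-j)(b-j)} \binom{a}{j}_q \binom{b}{j}_q [j]_q!\; x^{b-j} D^{a-j}.
$$
A convenient route is to prove first the one-variable identity $Dx^b = q^b x^b D + [b]_q x^{b-1}$ by induction on $b$ (the inductive step using $q^b + [b]_q = [b+1]_q$), and then to induct on $a$ to obtain the general formula. Applying this identity with $a=r$ and $b=s$, and multiplying on the left by $x^r$ and on the right by $D^s$, immediately yields the normal order of the middle word,
$$
x^r D^r x^s D^s = \sum_{j \geq 0} q^{(r-j)(s-j)} \binom{r}{j}_q \binom{s}{j}_q [j]_q!\; x^{r+s-j} D^{r+s-j}.
$$

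Finally I would extract the coefficient of $x^kD^k$. Setting $r+s-j=k$, i.e.\ $j=r+s-k$, the exponent of $q$ becomes $(r-(r+s-k))(s-(r+s-k)) = (k-s)(k-r) = (r-k)(s-k)$; substituting back into the display for $w_1w_2$ and invoking the uniqueness of the normal-order representation in (\ref{eq-weyl_general_q}) gives exactly the claimed formula. The main obstacle I anticipate is purely bookkeeping: getting the exponent of $q$ in the commutation rule correct and confirming that it collapses to the symmetric form $(r-k)(s-k)$ after the substitution $j=r+s-k$. Two sanity checks guide the computation: the base case should reproduce $Dx=qxD+1$, and the specialization $q\to 1$ should recover (\ref{Leibnitz_q=1}), and hence Claim~\ref{claim-union}, since there $[j]_q! \to j!$ and $\binom{r}{j}_q\binom{s}{j}_q[j]_q! \to \binom{r}{j}\binom{s}{j}j! = r^{\underline{j}}s^{\underline{j}}/j!$.
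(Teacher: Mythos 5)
Your proposal is correct, and its overall architecture is the same as the paper's: expand $w_1w_2$ as $\sum_{r,s}S^q_{w_1}(r)S^q_{w_2}(s)\,x^rD^rx^sD^s$, normal-order the middle word via a $q$-Leibniz identity, and read off the coefficient of $x^kD^k$ (with $j=r+s-k$, so the exponent collapses to $(r-k)(s-k)$ exactly as you check) using uniqueness of the normal-order representation. The only genuine difference is how the key identity is established. The paper proves
$$
x^r D^r x^s D^s \;=\; \sum_{j \geq 0} q^{(r-j)(s-j)}\frac{\left([r]_q\right)^{\underline j} \left([s]_q\right)^{\underline j}}{[j]_q!}\, x^{r+s-j}D^{r+s-j}
$$
(whose coefficient equals your $\binom{r}{j}_q \binom{s}{j}_q [j]_q!$, since $([r]_q)^{\underline j}=[r]_q[r-1]_q\cdots[r-j+1]_q=\binom{r}{j}_q[j]_q!$) by induction on $r+s$: it iterates the substitution $Dx \mapsto qxD+1$ on the rightmost $Dx$ a total of $r$ times to obtain the recursion $x^rD^rx^sD^s = x\left(q^r x^rD^rx^{s-1}D^{s-1} + [r]_q x^{r-1}D^{r-1}x^{s-1}D^{s-1}\right)D$, then combines the two inductive terms. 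You instead prove the more general commutation rule $D^a x^b = \sum_{j} q^{(a-j)(b-j)}\binom{a}{j}_q\binom{b}{j}_q[j]_q!\,x^{b-j}D^{a-j}$ from the one-variable identity $Dx^b=q^bx^bD+[b]_qx^{b-1}$ by induction on $a$, and then sandwich with $x^r(\cdot)D^s$; note your inductive step on $a$ will require the $q$-Pascal identity $\binom{a+1}{j}_q=\binom{a}{j}_q+q^{a+1-j}\binom{a}{j-1}_q$ (the paper's "little algebra" hides the analogous manipulation of $q$-falling factorials). Your route buys a standard, reusable normal-ordering formula for $D^ax^b$ that isolates all the noncommutativity in one statement; the paper's induction stays within words of the exact shape $x^rD^rx^sD^s$ that the claim needs and never introduces the intermediate commutation rule.
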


\begin{proof}
We begin by deriving the following analog of (\ref{Leibnitz_q=1}) in the $q$-deformed Weyl algebra:
\begin{equation} \label{Leibnitz_q}
x^r D^r x^s D^s = \sum_{j \geq 0} q^{(r-j)(s-j)}\frac{\left([r]_q\right)^{\underline j} \left([s]_q\right)^{\underline j}}{[j]_q!} x^{r+s-j}D^{r+s-j}.
\end{equation}
We prove this by induction on $r+s$. If $rs=0$ then the result is trivial. So, consider a pair $(r,s)$ with $r+s\geq 2$ and $rs>0$. Replace the right-most $Dx$ in $x^r D^r x^s D^s$ with $qxD+1$ to obtain a sum of two terms, one of which still has $r+s$ $x$'s and $r+s$ $D$'s. In this term, again replace the right-most $Dx$ with $qxD+1$. Iterating this operation $r$ times we obtain
$$
x^r D^r x^s D^s = x\left(q^r x^r D^r x^{s-1} D^{s-1} + [r]_q x^{r-1} D^{r-1} x^{s-1} D^{s-1}\right) D.
$$
By induction, the right-hand side above is the sum of
$$
\sum_{j \geq 0} q^{(r-j)(s-1-j)+r} \frac{([r]_q)^{\underline j}([s-1]_q)^{\underline j}}{[j]_q!} x^{r+s-j}D^{r+s-j}
$$
and
$$
\sum_{j \geq 0} [r]_q q^{(r-1-j)(s-1-j)} \frac{([r-1]_q)^{\underline j}([s-1]_q)^{\underline j}}{[j]_q!} x^{r+s-j-1}D^{r+s-j-1}.
$$
The coefficient of $x^{r+s}D^{r+s}$ in the normal order of $x^r D^r x^s D^s$ comes from the $j=0$ term of the first sum above, and is $q^{rs}$, as required. For $j>0$ the coefficient of $x^{r+s-j}D^{r+s-j}$ is
$$
q^{(r-j)(s-1-j)+r} \frac{([r]_q)^{\underline j}([s-1]_q)^{\underline j}}{[j]_q!} + [r]_q q^{(r-j)(s-j)} \frac{([r-1]_q)^{\underline{j-1}}([s-1]_q)^{\underline{j-1}}}{[j-1]_q!}
$$
which, after a little algebra, simplifies to
$$
q^{(r-j)(s-j)}\frac{\left([r]_q\right)^{\underline j} \left([s]_q\right)^{\underline j}}{[j]_q!},
$$
completing the induction. Armed with (\ref{Leibnitz_q}), we complete the proof of the claim exactly as in the derivation of (\ref{induction}).
\end{proof}

We also need the following lemma. Let $K_{r,s}$ be the complete bipartite graph with partition classes $X=\{x_1, \ldots, x_r\}$ and $Y=\{y_1, \ldots, y_s\}$. Let ${\mathcal M}_k(K_{r,s})$ be the set of matchings of size $k$ in $K_{r,s}$ (selections of $k$ edges from $K_{r,s}$, no two sharing an endvertex). Encode a matching $M \in {\mathcal M}_k(K_{r,s})$ by an $r$ by $s$ matrix (which we also call $M$) whose $ij$ entry is $1$ if $\{x_i,y_j\}$ is in the matching, and $0$ otherwise, and let $f(M)$ be as in Definition \ref{def-f(M)}; that is, $f(M)$ is the number of unmarked $0$'s in $M$ after marking all $0$'s that occur below or to the right of a $1$.
\begin{lemma} \label{lem-q-matching}
With the notation as above, for all $k \geq 0$ we have
\begin{equation} \label{q-sum}
\sum_{M \in {\mathcal M}_k(K_{r,s})} q^{f(M)} = q^{(r-k)(s-k)} \binom{r}{k}_q \binom{s}{k}_q [k]_q!.
\end{equation}
\end{lemma}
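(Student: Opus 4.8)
The plan is to pin down the statistic $f(M)$ exactly, as a sum of four contributions coming from the four types of cell, and then to sum $q^{f(M)}$ over all matchings, whereupon the sum factors into a product matching the three $q$-factors on the right (the remaining contribution being a constant). First I would fix $M \in {\mathcal M}_k(K_{r,s})$ and record the set $R$ of the $k$ rows that contain a $1$, the set $C$ of the $k$ columns that contain a $1$, and the way they are matched. Writing $R=\{i_1<\cdots<i_k\}$ and $C=\{j_1<\cdots<j_k\}$, the matching is encoded by the permutation $\pi \in S_k$ for which the $1$ in row $i_a$ lies in column $j_{\pi(a)}$. Each choice of $M$ is equivalent to a choice of the triple $(R,C,\pi)$, and this is the decomposition I will exploit.

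Next I would classify each cell $(i,j)$ according to whether $i \in R$ and whether $j \in C$, and determine in each class precisely when a $0$ there is unmarked (recalling that a $0$ is marked exactly when some $1$ lies above it in its column or to its left in its row). If $i \notin R$ and $j \notin C$ the cell is a $0$ that no $1$ can reach, so it is always unmarked; there are exactly $(r-k)(s-k)$ such cells, contributing a constant $(r-k)(s-k)$ to $f(M)$. If $i \in R$ and $j \notin C$, the cell is a $0$ that is unmarked iff the $1$ of row $i$ sits to its right, so the total count of such cells is $\sum_{c \in C} \#\{j \notin C : j < c\}$, which depends only on $C$; writing $c_t$ for the $t$-th smallest element of $C$ this count equals $\sum_t (c_t - t)$. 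Symmetrically, the cells with $i \notin R$, $j \in C$ contribute $\sum_t(\rho_t - t)$, depending only on $R$ (with $\rho_t$ the $t$-th smallest element of $R$). Finally, for $i \in R$, $j \in C$, a direct translation of the two unmarked conditions (``the $1$ of row $i$ is to the right of column $j$, and the $1$ of column $j$ is below row $i$'') into conditions on $\pi$ shows, after the substitution $b=\pi(b')$, that the number of unmarked such cells is exactly the number of inversions $\#\{a < b' : \pi(a) > \pi(b')\}$ of $\pi$.

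Having established
$$
f(M) = (r-k)(s-k) + \sum_{t}(\rho_t - t) + \sum_t (c_t - t) + \mathrm{inv}(\pi),
$$
I would sum $q^{f(M)}$ over all matchings, using the correspondence $M \leftrightarrow (R,C,\pi)$ and the fact that the three non-constant terms depend separately on $R$, on $C$, and on $\pi$. This factors the sum as
$$
q^{(r-k)(s-k)} \Big(\sum_{R} q^{\sum_t(\rho_t - t)}\Big)\Big(\sum_{C} q^{\sum_t (c_t - t)}\Big)\Big(\sum_{\pi \in S_k} q^{\mathrm{inv}(\pi)}\Big).
$$
I would finish by invoking three standard identities: the sum over $k$-subsets $C \subseteq \{1,\ldots,s\}$ of $q^{\sum_t (c_t-t)}$ is the Gaussian binomial $\binom{s}{k}_q$ (the exponents $\sum_t(c_t-t)$ range over the sizes of all partitions fitting in a $k \times (s-k)$ box), the analogous sum over $R$ is $\binom{r}{k}_q$, and $\sum_{\pi \in S_k} q^{\mathrm{inv}(\pi)} = [k]_q!$.

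The main obstacle I anticipate is the $i \in R$, $j \in C$ case: one must carefully convert the two unmarkedness conditions into a statement about $\pi$ and recognize the resulting count as $\mathrm{inv}(\pi)$. The other three contributions, together with the identification of the three factors with the $q$-binomials and $[k]_q!$, are routine once this decomposition is in hand.
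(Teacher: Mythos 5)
Your proposal is correct and is essentially the paper's own proof: your correspondence $M \leftrightarrow (R,C,\pi)$ together with the four-way classification of cells and the resulting identity $f(M) = (r-k)(s-k) + \sum_t(\rho_t - t) + \sum_t(c_t - t) + \mathrm{inv}(\pi)$ is exactly the paper's weight-preserving bijection $\varphi$ onto triples (row string, column string, permutation), with your statistics $\sum_t(\rho_t-t)$ and $\sum_t(c_t-t)$ being the paper's $\mathrm{zeros}(\sigma)$ and $\mathrm{zeros}(\tau)$. Both arguments then conclude by the same three standard $q$-identities, so there is nothing to add.
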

Notice that when $q=1$, (\ref{q-sum}) reduces to $|{\mathcal M}_k(K_{r,s})| = \binom{r}{k} \binom{s}{k} (k)!$,
which is evident: to specify a matching of size $k$ one first chooses subsets of $X$ and $Y$, each of size $k$ (there are $\binom{r}{k} \binom{s}{k}$ ways to do this) and then chooses how to match them up (there are $k!$ ways to do this).

\begin{proof} (Lemma \ref{lem-q-matching})
We use the following well-known interpretations of $\binom{r}{k}_q$ and $[k]_q!$ (see for example \cite[Theorem 3.6]{Andrews} for a statement that encompasses both). First, if ${\mathbb Q}^{(k)}_r$ is the set of $0$-$1$ strings of length $r$ that have exactly $k$ $1$'s, and for each such string $\sigma$,
${\rm zeros}(\sigma)$ is the sum, over all the $1$'s in $\sigma$, of the number of $0$'s to the left of that $1$, then
$$
\binom{r}{k}_q = \sum_{\sigma \in {\mathbb Q}^{(k)}_r} q^{{\rm zeros}(\sigma)}.
$$

Second, if ${\mathbb P}_k$ is the set of permutations of $\{1, \ldots, k\}$ (written in one-line notation), and for each such permutation $\pi$, ${\rm inv}(\pi)$ counts the number of inversions in $\pi$ (the number of pairs $(i,j)$ with $i < j$ such that $j$ appears before $i$ in $\pi$), then
$$
[k]_q! = \sum_{\pi \in {\mathbb P}_k} q^{{\rm inv}(\pi)}.
$$

From these interpretations we see that $q^{(r-k)(s-k)} \binom{r}{k}_q \binom{s}{k}_q [k]_q!$ expands out to a sum of monomials of the form $q^{(r-k)(s-k)+{\rm zeros}(\sigma)+{\rm zeros}(\tau)+ {\rm inv}(\pi)}$ where $\sigma \in {\mathbb Q}^{(k)}_r$, $\tau \in {\mathbb Q}^{(k)}_s$ and $\pi \in {\mathbb P}_k$, with exactly one such monomial for each triple $(\sigma,\tau,\pi)$. We will prove (\ref{q-sum}) by exhibiting a bijection $\varphi$ from ${\mathcal M}_k(K_{r,s})$ to ${\mathbb Q}^{(k)}_r \times {\mathbb Q}^{(k)}_s \times {\mathbb P}_k$ with the property that if $\varphi(M)=(\sigma,\tau,\pi)$ then $f(M) = (r-k)(s-k)+{\rm zeros}(\sigma)+{\rm zeros}(\tau)+ {\rm inv}(\pi)$.

The map $\varphi$ is defined as follows. For each $M \in {\mathcal M}_k(K_{r,s})$, we let $\sigma$ be the $0$-$1$ string of length $r$ that has a $1$ in the $i^{th}$ position exactly when row $i$ of $M$ has a $1$ (note that there are exactly $k$ $1$'s in $\sigma$) and we let $\tau$ be the $0$-$1$ string of length $s$ that has a $1$ in the $j^{th}$ position exactly when column $j$ of $M$ has a $1$ (again, this is a vector with exactly $k$ $1$'s). Finally, to define $\pi$, we let $M'$ be the (unique) $k$ by $k$ submatrix of $M$ that includes all $k$ of the $1$'s. Relabeling the rows and columns of $M'$ by $1$ through $k$ in the natural way, let the locations of the $1$'s in $M'$ be $(i_1,1), (i_2,2), \ldots, (i_k,k)$. The permutation $\pi$ is then $i_1\ldots i_k$.

For example, suppose that $r=7$, $s=6$, $k=4$ and the matching $M$ consists of the edges $\{x_2,y_3\}$, $\{x_4,y_4\}$, $\{x_5,y_1\}$ and $\{x_6,y_6\}$ (see Figure \ref{fig-q analog}; the marked entries in $M$ are identified with a ``$\star$''). We have $f(M)=19$, $\sigma=0101110$, $\tau=101101$, $\pi=(3,1,2,4)$, $(r-k)(s-k)=6$, ${\rm zeros}(\sigma)=7$, ${\rm zeros}(\tau)=4$ and ${\rm inv}(\pi)=2$.

\begin{figure}[ht]
\[
\left( \begin{array}{cccccc}
0&0&0&0&0&0\\
0&0&1&\star&\star&\star\\
0&0&\star&0&0&0\\
0&0&\star&1&\star&\star\\
1&\star&\star&\star&\star&\star\\
\star&0&\star&\star&0&1\\
\star&0&\star&\star&0&\star
\end{array}
\right)
\]
\caption{An example matrix $M$, with the marked $0$'s in $M$ identified with a ``$\star$.''}
\label{fig-q analog}
\end{figure}

We first note that $\varphi$ is a bijection from ${\mathcal M}_k(K_{r,s})$ to ${\mathbb Q}^{(k)}_r \times {\mathbb Q}^{(k)}_s \times {\mathbb P}_k$, since the location of $M'$ can be reconstructed from $\sigma$ and $\tau$, and the exact location of the $1$'s in $M$ is then determined by $\pi$.

To see $f(M)=(r-k)(s-k)+{\rm zeros}(\sigma)+{\rm zeros}(\tau)+ {\rm inv}(\pi)$, first note that there are exactly $(r-k)(s-k)$ entries $ij$ in $M$, all unmarked $0$'s, with no $1$ in row $i$ and no $1$ in column $j$ (these entries are labeled ``$a$'' in the matrix in Figure \ref{fig-q analog partition}). Call this set of entries $M_0$.

\begin{figure}[ht]
\[
\left( \begin{array}{cccccc}
b&a&b&b&a&b\\
d&c&\bullet&\bullet&\bullet&\bullet\\
b&a&\bullet&b&a&b\\
d&c&\bullet&\bullet&\bullet&\bullet\\
\bullet&\bullet&\bullet&\bullet&\bullet&\bullet\\
\bullet&c&\bullet&\bullet&c&\bullet\\
\bullet&a&\bullet&\bullet&a&\bullet
\end{array}
\right)
\]
\caption{The unmarked $0$'s of $M$, partitioned into $M_0$ ($a$'s), $M_{\sigma}$ ($b$'s), $M_{\tau}$ ($c$'s), and $M'$ ($d$'s).}
\label{fig-q analog partition}
\end{figure}

Next, consider all of the entries in $M$ that lie above (and in the same column as) a $1$, but are not in $M'$ (these entries are labeled ``$b$'' in Figure \ref{fig-q analog partition}). They are all unmarked $0$'s (being above a $1$, each such entry is not below a $1$, and being outside of $M'$, it is not to the right of a $1$), and there are exactly ${\rm zeros}(\sigma)$ of them. Call this set of entries $M_\sigma$; note that $M_0$ and $M_\sigma$ are disjoint.

Next, consider all of the entries in $M$ that lie to the left (and in the same row as) a $1$, but are not in $M'$ (these entries are labeled ``$c$'' in Figure \ref{fig-q analog partition}). They are all unmarked $0$'s (being to the left of a $1$, each such entry is not to the right of a $1$, and being outside of $M'$, it is not below a $1$), and there are exactly ${\rm zeros}(\tau)$ of them. Call this set of entries $M_\tau$; note that $M_0$ and $M_\tau$ are evidently disjoint, and that also $M_\sigma$ and $M_\tau$ are disjoint --- an entry in the intersection of $M_\sigma$ and $M_\tau$ would have to be both above a $1$ and to the left of a $1$, and so be in $M'$.

Note that $M_0 \cup M_\sigma \cup M_\tau$ is exactly the set of unmarked $0$'s outside of $M'$, so we are done if we can show that the number of unmarked $0$'s in $M'$ (entries labeled ``$d$'' in Figure \ref{fig-q analog partition}) is ${\rm inv}(\pi)$. Consider such an unmarked $0$, at position $(i,j)$. It must be located above some $1$, at position $(i',j)$, say, and to the left of some other $1$, at position $(i,j')$, say. This gives rise to a pair of $1$'s at positions $(i',j)$ and $(i,j')$, with $j<j'$ and $i<i'$, which is an instance of an inversion in $\pi$; and conversely, any inversion in $\pi$ is easily seen to correspond to an unmarked $0$ in $M'$.
\end{proof}

To prove Theorem \ref{thm-q}, we proceed by induction on the length of $w$. If $w$ has length $2$ then $w=xD$, $G_w=K_1$, and the result is trivial.

If $w$ is irreducible and of length greater than $2$, then $w=xw'D$ for some Dyck word $w$. By induction, we have that for each $k$,
$$
\sum_{P' \in {\mathcal P}(w',k-1)} q^{{\rm wt}(P')} = S_{w'}^q(k-1).
$$
There is a one-to-one correspondence between ${\mathcal P}(w',k-1)$ and ${\mathcal P}(w,k)$ (as discussed in Section \ref{subsec-q}), and for each $P' \in {\mathcal P}(w',k-1)$ with corresponding partition $P \in {\mathcal P}(w,k)$ we have, by definition, ${\rm wt}(P)={\rm wt}(P')$. So, using Claim \ref{claim-domq},
\begin{eqnarray*}
\sum_{P \in {\mathcal P}(w',k-1)} q^{{\rm wt}(P)} & = & \sum_{P' \in {\mathcal P'}(w',k-1)} q^{{\rm wt}(P')} \\
& = & S_{w'}^q(k-1) \\
& = & S_w^q(k)
\end{eqnarray*}
as required.

There remains the case where $w=w_1w_2 \ldots w_\ell$ is reducible (with each $w_i$ irreducible). We assume $\ell=2$; this will not lose us any generality, but will allow us to simplify notation by writing ``$w_2$'' for ``$w_2 \ldots w_\ell$'' everywhere. Combining Claim \ref{claim-unionq}, the induction hypothesis, and Lemma \ref{lem-q-matching}, what we need to show is that, for each $k$,
$$
\sum_{P \in {\mathcal P}(w_1w_2,k)} q^{{\rm wt}(P)} = \sum_{r, s \geq 0,~ P_1 \in {\mathcal P}(w_1,r), ~P_2 \in {\mathcal P}(w_2,s), ~M \in {\mathcal M}_{r+s-k}(K_{r,s})} q^{{\rm wt}(P_1)+{\rm wt}(P_2)+f(M)}
$$
(note that $(r-(r+s-k))(s-(r+s-k))=(r-k)(s-k)$). But this is immediate: we obtain all partitions of $G_{w_1w_2}$ into $k$ non-empty independent sets (and nothing more) by selecting integers $r$ and $s$, selecting a partition $P_1$ of $G_{w_1}$ into $r$ non-empty independent sets and a partition $P_2$ of $G_{w_2}$ into $s$ non-empty independent sets, and selecting a matching $M$ of size $r+s-k$ from $K_{r,s}$ which determines how $P_1$ and $P_2$ are merged; and by definition the weight ${\rm wt}(P)$ of such a matching produced by selecting a particular $r, s, P_1, P_2$ and $M$ is equal to ${\rm wt}(P_1)+{\rm wt}(P_2)+f(M)$.


\begin{thebibliography}{99}

\bibitem{Andrews}
G. Andrews, {\em The Theory of Partitions}, Addison-Wesley, Reading, 1976.

\bibitem{ArikCoon}
M. Arik and D. Coon,
Hilbert spaces of analytic functions and generalized coherent states,
{\em J. Mathematical Phys.} {\bf 17} (1976), 524–-527.

\bibitem{AsaklyMansourSchork}
W. Asakly, T. Mansour and M. Schork, Representing elements of the Weyl algebra by labeled trees, {\em J. Math. Phys.} {\bf 54} (2013), 023514.

\bibitem{Birkhoff}
G. Birkhoff, A determinant formula for the number of ways of coloring a map, {\em Ann. of Math.} {\bf 14} (1912/13), 42--46.

\bibitem{BlasiakDuchampHorzelaPensonSolomon}
P. Blasiak, G. Duchamp, A. Horzela, K. Penson and A. Solomon, Heisenberg-Weyl algebra revisited: combinatorics of words and paths, {\em Journal of Physics A: Mathematical and Theoretical} {\bf 41} (2008), 415204.

\bibitem{BlasiakFlajolet}
P. Blasiak and P. Flajolet, Combinatorial models of creation-annihilation, {\em Seminaire Lotharingien de Combinatoire} {\bf 65} (2011), Article B65c.

\bibitem{BlasiakHorzelaPensonSolomonDuchamp}
P. Blasiak, A. Horzela, K. Penson, A. Solomon and G. Duchamp, Combinatorics and Boson normal ordering: A gentle introduction, {\em American Journal of Physics} {\bf 75} (2007), 639--646.

\bibitem{BlasiakPensonSolomon}
P. Blasiak, K. Penson and A. Solomon, The general boson normal ordering problem, {\em  Physics Letters A} {\bf 309} (2003), 198-–205.

\bibitem{BlasiakPensonSolomon2}
P. Blasiak, K. Penson and A. Solomon, The Boson Normal Ordering Problem and Generalized
Bell Numbers, {\em Annals of Combinatorics} {\bf 7} (2003), 127--139.

\bibitem{Carlitz}
L. Carlitz, On a class of finite sums, {\em The American Mathematical Monthly} {\bf 37} (1930), 472--479.

\bibitem{Carlitz2}
L. Carlitz, On Arrays of Numbers, {\em Amer. J. Math.} {\bf 54} (1932), 739-–752.

\bibitem{Carlitz3}
L. Carlitz, $q$-Bernoulli numbers and polynomials, {\em Duke Math. J.} {\bf 15} (1948), 987-–1000.

\bibitem{Comtet}
L. Comtet, Une formule explicite pour les puissances successives de l'op\'erateur de d\'erivation de Lie, {\em C. R. Acad. Sci. Paris S\'er. A-B} {\bf 276} (1973), A165-–A168.

\bibitem{CodaraDantonaHell}
P. Codara, O. M. D'Antona and P. Hell, A simple combinatorial interpretation of certain
generalized Bell and Stirling numbers, {\em Discrete Math.} {\bf 318} (2014), 53--57.

\bibitem{GebhardSagan}
D. Gebhard and B. Sagan, A chromatic symmetric function in noncommuting variables, {\em J. Algebraic Combin.} {\bf 13} (2001), 227--255.

\bibitem{GoldmanJoichiWhite}
J. Goldman, J. Joichi and D. White, Rook Theory III. Rook polynomials and the Chromatic structure of graphs, {\em J. Combin. Th. Ser. B} {\bf 25} (1978), 135--142.

\bibitem{Griffiths}
M. Griffiths, Sitting Down for Dinner at a Twin Convention, {\em Mathematics Magazine} {\bf 86} (2013), 26--38.

\bibitem{Lang}
W. Lang, On Generalizations of the Stirling Number Triangles, {\em  Journal of Integer Sequences} {\bf 3} (2000), Article 00.2.4.

\bibitem{Lang2}
W. Lang, Combinatorial Interpretation of Generalized
Stirling Numbers, {\em Journal of Integer Sequences} {\bf 12} (2009), Article 09.3.3.

\bibitem{MaMansourSchork}
S.-M. Ma, T. Mansour and M. Schork, Normal ordering problem and the extensions of the
Stirling grammar, {\em Russ. J. Math. Phys.} {\bf 21} (2014), 242--255.

\bibitem{MendezBlasiakPenson}
M. Mendez, P. Blasiak and A. Penson, Combinatorial approach to generalized Bell and Stirling numbers and boson normal ordering problem, {\em J. Math. Phys.} {\bf  46} (2005), 083511.

\bibitem{Navon}
A. Navon, Combinatorics and fermion algebra, {\em Nuovo Cimento} {\bf 16} (1973) 324-–330.

\bibitem{Scherk}
H. Scherk, {\em De evolvenda functione $(yd.yd.yd \ldots yd X)/dx^n$ disquisitiones nonnullae analyticae}, Ph.D. thesis, University of Berlin, 1823.

\bibitem{Schork}
M. Schork, On the combinatorics of normal ordering bosonic
operators and deformations of it, {\em J. Phys. A: Math. Gen.} {\bf 36} (2003), 4651–-4665.

\bibitem{Stirling}
J. Stirling, {\em Methodus differentialis, sive tractatus de summatione et interpolatione serierum infinitarum}, London, 1730.

\bibitem{Tomescu}
I. Tomescu, {\em M\'ethods combinatoires dans le th\'eorie des automates finis}, Ph.D. thesis, Bucarest, 1971.

\bibitem{SolomonDuchampBlasiakHorzelaPenson}
A. Solomon, G. Duchamp, P. Blasiak, A. Horzela and K. Penson, Normal order: combinatorial graphs,
{\em Proc. 3rd Int. Symp. on Quantum Theory and Symmetries (Cincinnati 2003)} (Singapore: World Scientific), 527–-536.

\bibitem{Varvak}
A. Varvak, Rook numbers and the normal ordering problem, {\em J. Combin. Theory Ser. A} {\bf  112} (2005), 292-–307.

\bibitem{Jing-HoJer-JeongChang}
J-H. Yan, J.-J. Chen and G. Chang, Quasi-threshold graphs, {\em Discrete Applied Mathematics} {\bf 69} (1996), 247--255.

\end{thebibliography}
\end{document}